\documentclass[11pt]{article}
\usepackage{amsmath,amsthm,url}
\usepackage{authblk}
\usepackage[top=1in, bottom=1in, left=1.25in, right=1.25in]{geometry}

\newtheorem{theorem}{Theorem}[section]
\newtheorem{example}{Example}[section]

\newtheorem{lemma}[theorem]{Lemma}
\newtheorem{remark}[theorem]{Remark}

\newcommand{\MS}[1]{\mathsf{#1}}
\newcommand{\BB}{\ensuremath{\mathcal{B}}}
\newcommand{\YY}{\ensuremath{\mathcal{Y}}}

\title{An introduction to local differential privacy protocols using block designs}
\author[1]{Maura B.\ Paterson}
\affil[1]{School of Computing and Mathematical Sciences, Birkbeck, University of London, Malet St, London WC1E 7HX, UK}
\author[2]{Douglas R.\ Stinson\thanks{D.R. Stinson’s research is supported by an NSERC General Research Funds grant (University of Waterloo).}}
\affil[2]{David R.\ Cheriton School of Computer Science\\University of Waterloo\\ Waterloo ON, N2L 3G1, Canada}

\begin{document}
\maketitle

\begin{abstract}
The design of protocols for \emph{local differential privacy} (or \emph{LDP}) has been a topic of considerable research interest in recent years. LDP protocols utilise the randomised encoding of outcomes of an experiment using a transition probability matrix (TPM).
Several authors have observed that balanced incomplete block designs (BIBDs) provide nice examples of TPMs for  LDP protocols. Indeed, it has been shown that such BIBD-based LDP protocols provide optimal estimators. 

In this primarily expository paper, we give a detailed introduction to LDP protocols and their connections with block designs. We prove that a subclass of LDP protocols known as pure LDP protocols are equivalent to $(r,\lambda)$-designs  (which contain balanced incomplete block designs as a special case). An unbiased estimator for an LDP scheme is a left inverse of the transition probability matrix. We show that the optimal estimators for  BIBD-based TPMs are precisely those obtained from the Moore-Penrose inverse of the corresponding TPM. We also review some existing work on optimal LDP protocols in the context of pure protocols.
\end{abstract}

\section{Introduction}
\label{intro.sec}

In 1965, Warner \cite{War} proposed the ``randomised response'' protocol. The setting is a survey where participants are required to provide a yes-no answer regarding a certain attribute. After collecting a number of responses, the goal is to determine the probability $p$ that the attribute exists in a random participant. The issue is that participants may be reluctant to give correct answers due to privacy concerns. 

Warner proposed that each participant should provide the correct response with some specified probability $\theta > 1/2$. With probability $1 - \theta$ the participant will give the incorrect response. 
Given a number $t$ of responses, it is fairly straightforward to determine an unbiased estimator $\hat{p}$ for $p$
(an estimator $\hat{p}$ is \emph{unbiased} if $E[\hat{p}] = p$). The accuracy of the estimator is measured by computing the variance $E[(p - \hat{p})^2]$. Intuitively, as $\theta$ increases, the accuracy of the survey increases but privacy decreases. 

In a \emph{local differential privacy} (or \emph{LDP}) protocol, 
we have $n$ possible outcomes of each trial of some experiment.
We define a perturbation function (or \emph{local randomiser}) $f : X \rightarrow Y$.
 $X$ 
 is a set of $n$ possible outcomes. 
 $Y$ 
 is the set of $m$ possible ``perturbed'' or randomised values reported by participants; we will require that $m \geq n$.

There is also a parameter $\theta$, where $0 < \theta < 1$.
For each $x$,
there is a specified subset $Y_x \subseteq Y$.
The perturbation function $f$ maps $x$ to an element of $Y_x$ with probability $\theta$, and
$x$ is mapped to an element of $Y \setminus Y_x$ with probability $1 -\theta$.
More precisely, we have
\[
\mathsf{Prob}[Y = y | X = x ] = \mathsf{Prob}[f(x) = y] = 
\begin{cases}
\frac{\theta}{|Y_x|} & \text{if $y \in Y_x$}\\
\frac{1-\theta}{|Y \setminus Y_x|} & \text{if $y \in Y \setminus Y_x$}.
\end{cases}
\]
This is interpreted as follows. Given the outcome $x$, a biased coin is flipped. With probability $\theta$, choose an element  $y \in Y_x$ uniformly at random, and with probability $1- \theta$, choose an element  $y \in Y \setminus Y_x$ uniformly at random. The resulting value $y$ is then reported. 

\begin{example}
Suppose $X = Y = \{0,1\}$, $Y_0 = \{0\}$ and $Y_1 = \{1\}$. Then we obtain the ``randomised response'' scheme of Warner.
\end{example}

For later reference, we define 
\begin{equation}
\label{TP.eq}
\alpha_{x,1} = \frac{\theta}{|Y_x|} \quad \text{and} \quad
\alpha_{x,2} = \frac{1-\theta}{|Y \setminus Y_x|}.
\end{equation}

In general, $\alpha_{x,1}$ and $\alpha_{x,2}$ will depend on $x$. However, we will later be investigating schemes where $\alpha_{x,1}$ and $\alpha_{x,2}$ are constants, independent of $x$.

We think of the points in $Y_x$ as being ``high probability'' perturbations of $x$ and the remaining points as being 
``low probability'' perturbations. 
So $\theta$ should be chosen such that $\alpha_{x,1}$ is somewhat greater than
$\alpha_{x,2}$, for all $x$.

The above ideas are quantified by the definition of \emph{differential privacy} from Duchi, Jordan and Wainwright \cite{DJW}.
Let $\epsilon > 0$ be a specified privacy parameter. Then we say that the local randomiser
 $f : X \rightarrow Y$ is \emph{$\epsilon$-locally differentially private} (or \emph{$\epsilon$-LDP}) provided that
\begin{equation}
\label{LDP.eq}
\frac{\mathsf{Prob}[Y = y | X = x ]}{\mathsf{Prob}[Y = y | X = x']} 
 \leq e^{\epsilon}
\end{equation}
for all $y \in Y$ and all $x, x' \in X$, $x \neq x'$.
The quantity $e^{\epsilon}$ is the \emph{differential privacy ratio}.

In the setting described above, we can use (\ref{TP.eq}) to simplify (\ref{LDP.eq}) as follows:
\begin{equation}
\label{LDP2.eq}
\alpha_{x,1} \leq e^{\epsilon} \alpha_{x,2}
\end{equation}
should hold for all $x$.

\medskip

We now describe LPD mechanisms derived from appropriate set systems. 
Suppose we have $m$ points in $Y$ and $n$ sets (or blocks) $Y_x$ ($x \in X$), which are subsets of $Y$. 
We denote $\mathcal{Y} = \{ Y_x: x \in X \}$ and we investigate properties of the set system
 $(Y, \mathcal{Y})$.
 
We can also consider the dual set system consisting of the $n$ points in $X$ and the $m$ blocks
$B_y$ ($y \in Y$) defined as  $B_y = \{ x : y \in Y_x\}$.\footnote{The dual set system just interchanges the roles of points and blocks. Equivalently, we transpose the incidence matrix of the set system.} 
Observe that $x \in B_y$ if and only if $y \in Y_x$. 
We denote $\mathcal{B} = \{ B_y: y \in Y\}$.
Then  $(X,\mathcal{B})$ is a set system on $n$ points and $m$ blocks.


\begin{example}
\label{E1.exam}
Suppose $n=4$, $m=6$, $X = \{1,2,3,4\}$ and $Y = \{\MS{a},\MS{b},\MS{c},\MS{d},\MS{e},\MS{f}\}$, and let
the blocks $Y_1, \dots , Y_4$ in $\mathcal{Y}$ be as follows:
\[
\begin{array}{llll}
Y_1 = \{ \MS{a},\MS{b},\MS{c}\} &
Y_2 = \{ \MS{a},\MS{d},\MS{e}\}&
Y_3 = \{ \MS{b},\MS{d},\MS{f}\}&
Y_4 = \{ \MS{c},\MS{e},\MS{f}\}.
\end{array}
\]
The set system $(Y, \mathcal{Y})$ has six points and four blocks.

\smallskip

The dual set system $(X,\mathcal{B})$ has four points and six blocks:
\[
\begin{array}{llllll}
B_{\MS{a}} = \{ 1,2\}&
B_{\MS{b}} = \{ 1,3\}&
B_{\MS{c}} = \{ 1,4\}&
B_{\MS{d}} = \{ 2,3\}&
B_{\MS{e}} = \{ 2,4\}&
B_{\MS{f}} = \{ 3,4\}.
\end{array}
\]
$(X,\mathcal{B})$ is a $(4,6,3,2,1)$-BIBD.

\medskip

The following table shows the resulting local randomiser. The $(y,x)$-entry in the table is the
probability that $x \in X$ is mapped to $y \in Y$, i.e., $\mathsf{Prob}[f(x) = y]$.

\[
\begin{array}{c|cccc}
 & 1 & 2 & 3 & 4  \\ \hline
 \MS{a} & \frac{\theta}{3} & \frac{\theta}{3} & \frac{1-\theta}{3} & \frac{1-\theta}{3} \rule[2mm]{0cm}{3mm} \\
 \MS{b} & \frac{\theta}{3} & \frac{1-\theta}{3} & \frac{\theta}{3} & \frac{1-\theta}{3} \rule[2mm]{0cm}{3mm} \\
 \MS{c} & \frac{\theta}{3} & \frac{1-\theta}{3} & \frac{1-\theta}{3} & \frac{\theta}{3} \rule[2mm]{0cm}{3mm} \\
 \MS{d} & \frac{1-\theta}{3} & \frac{\theta}{3} & \frac{\theta}{3} & \frac{1-\theta}{3} \rule[2mm]{0cm}{3mm} \\
 \MS{e} & \frac{1-\theta}{3} & \frac{\theta}{3} & \frac{1-\theta}{3} & \frac{\theta}{3} \rule[2mm]{0cm}{3mm} \\
 \MS{f} & \frac{1-\theta}{3} & \frac{1-\theta}{3} & \frac{\theta}{3} & \frac{\theta}{3} \rule[2mm]{0cm}{3mm} \\
\end{array}
\]
\end{example}

\medskip

\subsection{Our Contributions}

This paper is mainly expository in nature, although we do present a few new results.
We should emphasize that our main goal is to treat LDP protocols in a uniform combinatorial setting.

In Section \ref{BDRR.sec}, we discuss LDP protocols based on balanced incomplete block designs (BIBDs); these protocols have been termed ``block design randomised response'' in the literature.
Section \ref{pure.sec} concerns so-called \emph{pure} LDP protocols. We prove that a  pure LDP protocol is equivalent to an $(r,\lambda)$- design; these designs  include BIBDs as special cases.
In Section \ref{est.sec}, we turn our attention to estimators. We derive formulas to quantify the effectiveness of unbiased estimators for the (unknown) underlying probability distribution. An unbiased estimator for an LDP scheme is a left inverse of the transition probability matrix.  In Section \ref{variance.sec}, we discuss how to compute the variance of an unbiased estimator (this is sometimes called ``risk''). Explicit formulas are presented in the case of  pure protocols.
In Section, \ref{optimal.sec}, we present some results on 
LDP protocols based on BIBDs; they are in fact optimal in a certain sense. One new result  we prove is that the optimal estimators for BIBD-based LDP protocols are precisely those obtained from the Moore-Penrose inverse of the corresponding TPM.
We also review some previous work on optimality of TPMs in a general setting.

We finish this section by presenting in Table \ref{tab1} the notation that is used in the rest of this paper.

\begin{table}[tb]
\caption{Notation used in this paper}
\label{tab1}
\begin{center}\begin{tabular}{|l|l|l|}
\hline
$X = \{x_1, \dots , x_n\}$  & set of  possible outcomes & Section \ref{intro.sec}\\
$Y = \{y_1, \dots , y_m\}$ & set of possible perturbed values & Section \ref{intro.sec}\\
$Y_x$ & high probability outcomes for a given $x \in X$ & Section \ref{intro.sec}\\
$\theta$ & probability of a high probability outcome & Section \ref{intro.sec}\\
$e^{\epsilon}$ & differential privacy ratio & Section \ref{intro.sec}\\ \hline
$(v,b,r,k,\lambda)$ & BIBD parameters ($n = v$, $m = b$) & Section \ref{BDRR.sec}\\
$(X, \BB )$ & points and blocks of BIBD & Section \ref{BDRR.sec}\\
$(Y, \YY )$ & points and blocks of dual BIBD & Section \ref{BDRR.sec}\\
$\alpha_1$ & probability of each possible high probability outcome & Section \ref{BDRR.sec}\\
$\alpha_2$ & probability of each possible low probability outcome & Section \ref{BDRR.sec}\\
$A$ & $b$ by $v$ incidence matrix of a BIBD & Section \ref{BDRR.sec}\\
$J$ & all-$1$'s matrix & Section \ref{BDRR.sec}\\
$Q$ & $b$ by $v$ transition probability matrix (TPM) & Section \ref{BDRR.sec}\\ \hline
$p^*$ & true positive rate of a pure LDP protocol & Section \ref{pure.sec}\\
$q^*$ & false positive rate of a pure LDP protocol & Section \ref{pure.sec}\\ \hline
$\pi = (p_1, \dots, p_n)$ & probability distribution on $X$ & Section \ref{est.sec}\\
$t$ & number of samples &  Section \ref{est.sec}\\
$T_j$ & number of occurrences of points in $Y_j$ &  Section \ref{est.sec}\\
$\tilde{p}_j$ & estimator for $p_j$ &  Section \ref{est.sec}\\
$f_1, \dots , f_m$ & frequencies of perturbed samples &  Section \ref{est.sec}\\
$L$ & unbiased linear estimator &  Section \ref{est.sec}\\
$Q^+$ & Moore-Penrose left inverse of $Q$ &  Section \ref{est.sec}\\
$B(t,p)$ & binomial distribution with $t$ trials and probability $p$ & Section \ref{est.sec}\\
$\mathrm{Var}(\; )$ & variance of a random variable & Section \ref{est.sec}\\
$E[ \; ]$ & expectation of a random variable & Section \ref{est.sec}\\
$\rho = (\rho_1, \dots , \rho_m)$ & probability distribution on $Y$ induced by $\pi$ and $Q$ 
& Section \ref{est.sec}\\ \hline
$D_{\rho}$  & diagonal matrix with 
$\rho$ on the main diagonal  
& Section \ref{variance.sec}\\
$\mathbf{R}(Q,L;\pi)$ & risk function & Section \ref{variance.sec}\\
$V$ & covariance matrix & Section \ref{variance.sec}\\
\hline
\end{tabular}
\end{center}
\end{table}

\section{Block Design Randomized Response}
\label{BDRR.sec}

Many authors have investigated the construction of LDP protocols using various kinds of balanced incomplete block designs (BIBDs), including Hadamard designs and projective geometries, to name two examples. 
Relevant references include \cite{ASZ,FNNT,Gen,PNL,YB}. The paper by Park, Nam and Lee provided a unified treatment of these approaches. We briefly review  the description of this approach, which is termed
``Block Design Randomized Response''  in  \cite[Definition 5]{Gen}.

First, we recall the definition of a BIBD. A   $(v,b,r,k ,\lambda)$-BIBD is a set system $(X,\BB)$ that satisfies the following properties:
\begin{enumerate}
\item $X$ is a set of $v$ \emph{points}
\item $\BB$ is a set of $b$ subsets of $X$ called \emph{blocks}
\item $|B| = k$ for every $B \in \BB$
\item every point occurs in exactly $r$ blocks
\item every pair of points is contained in exactly $\lambda$ blocks.
\end{enumerate}
Sometimes the parameters of a BIBD are written as a triple $(v,k,\lambda)$. The other two parameters, $r$ and $b$, are determined from the following fundamental equations:
\[ bk = vr\] and
\[ \lambda (v-1) = r(k-1).\]

It is instructive to refer to the construction given in 
Example \ref{E1.exam}.
As in this example, $(X,\mathcal{B})$ will be a $(v,b,r,k ,\lambda)$-BIBD and 
 $(Y, \mathcal{Y})$ is the dual design. Hence $n = |X| = v$ and $m = |Y| = b$. Each block of $(Y, \mathcal{Y})$ has cardinality $r$ and any two distinct blocks in $\mathcal{Y}$ intersect in exactly $\lambda$ points.
 
 Using the facts that $|Y_x| = r$ for all $x$ and $|Y| = b$, we have 
\begin{align}
 \label{alpha_1.eq} \alpha_{x,1} &= \frac{\theta}{r} \quad \text{and} \\
\label{alpha_2.eq}\alpha_{x,2} &= \frac{1- \theta}{b  -r}
\end{align}
for all $x$. Since $\alpha_{x,1}$ and
$\alpha_{x,2}$ are constants, we denote
$\alpha_1 = \alpha_{x,1}$ and
$\alpha_2 = \alpha_{x,2}$.

As in (\ref{LDP2.eq}), the differential privacy ratio  $\alpha_1/\alpha_2$ 
should be at least $e^{\epsilon}$. So we require that 
\begin{equation}
\label{ep2.eq}
 e^{\epsilon} \geq \frac{\frac{\theta}{r}}{\frac{1-\theta}{b-r}} = 
\left( \frac{\theta}{1 - \theta} \right) \left( \frac{b-r}{r} \right).
\end{equation}
For convenience, assume  we have equality in (\ref{ep2.eq}). Then we have 
\begin{align}
\nonumber \theta &= \frac{r e^{\epsilon}}{b  + r (e^{\epsilon} - 1)}\\
\label{p_1.eq}\alpha_1 &= \frac{e^{\epsilon}}{b  + r (e^{\epsilon} - 1)} \quad \text{and} \\
\label{p_2.eq}\alpha_2 &= \frac{1}{b  + r (e^{\epsilon} - 1)}.
\end{align}

\medskip

\begin{remark} The $k$-subset mechanism presented in \cite{Wang2019} is an example of Block Design Randomized Response. It uses the (trivial) $\left( v,\binom{v}{k}, \binom{v-1}{k-1}, k, 1\right)$-BIBD consisting of all
$k$-subsets of a set of $v$ points. The formulas in  \cite[equation (3)]{Wang2019} are the same as our equations (\ref{alpha_1.eq}) and (\ref{alpha_2.eq}). The paper \cite{YB} uses a similar approach.
\end{remark}

\bigskip

The \emph{transition probability matrix} (or \emph{TPM}) of an LDP protocol is just the table presented in Example \ref{E1.exam}. In general, the TPM is   a $b$ by $v$ matrix $Q$ where the rows are indexed by the blocks in $\BB$ and the columns are indexed by the points in $X$. For all $B \in \BB$ and all $x \in X$, the entry in row $B$ and column $x$ of $Q$ 
is $\mathsf{Prob}[f(x) = B]$.

Suppose that $A$ is the $b$ by $v$ incidence matrix of the $(v,b,r,k, \lambda)$-BIBD $(X,\mathcal{B})$. Since $(X,\mathcal{B})$ is a BIBD, the following equation holds:
\begin{equation}
\label{BIBD.eq}
A^T A = \lambda J_{v \times v} + (r - \lambda)I_{v \times v},
\end{equation}
where $J_{v \times v}$ is a $v$ by $v$ matrix of $1$'s.

Then the TPM $Q$ can be written as 
\begin{equation}
\label{TPM.eq} Q = \alpha_1 A + \alpha_2(J_{b \times v} - A) = A (\alpha_1 - \alpha_2) + \alpha_2 J_{b \times v},
\end{equation}
where $J_{b \times v}$ is a $b$ by $v$ matrix of $1$'s.
When $\alpha_1$ and $\alpha_2$ have the values given in (\ref{p_1.eq}) and (\ref{p_2.eq}),  we have
\[ Q = \alpha_2 (A (e^{\epsilon} - 1) + J_{b \times v}).\]

\section{Pure LDP Protocols}
\label{pure.sec}

In this section, we study a useful subclass of LDP protocols (namely, pure protocols) and show that they are equivalent to a relaxation of BIBDs known as $(r,\lambda)$-designs.
First, we recall the definition of a \emph{pure LDP protocol} from \cite[Definition 3]{WBLJ}.
In a pure LDP protocol, there should exist probabilities $p^* > q^*$ such that 
\begin{align*}
\mathsf{Prob}[f(x) \in \{y : x \in B_y \}]  &= p^* \quad \text{for all $x$}\\
\mathsf{Prob}[f(x')\in \{y  : x \in B_y \}]  &= q^* \quad \text{for all $x' \neq x$.}
\end{align*}
These two conditions are equivalent to the following:
\begin{align*}
\mathsf{Prob}[f(x) \in Y_x]  &= p^* \\
\mathsf{Prob}[f(x')\in Y_x]  &= q^* \quad \text{for all $x' \neq x$.}
\end{align*}
It may be helpful to think of $p^*$ as being a \emph{true positive rate} and $q^*$ as being a \emph{false positive rate} for the protocol.

Note that, by definition, we have the following formula:
\begin{equation}
\label{p*.eq}
p^* = \theta.
\end{equation}
The second probability, $q^*$, is computed as follows:
\begin{equation}
\label{q*.eq}  q^* 
= |Y_x \cap Y_{x'}| \left(\frac{\theta}{|Y_x|}\right) 
+ |Y_x \setminus Y_{x'}| \left(\frac{1-\theta}{|Y \setminus Y_x|}\right)
\end{equation}
for all $x \neq x'$.

We illustrate by returning to Example \ref{E1.exam}.
\begin{example}
Suppose we want to compute (\ref{q*.eq}) when $x = 1$ and $x'=2$.
That is, we want to determine $\mathsf{Prob}[f(2)\in Y_1]$.
We have the following probabilities:
\begin{align*}
\mathsf{Prob}[f(2) = \MS{a}]  &= \frac{\theta}{3}\\
\mathsf{Prob}[f(2) = \MS{b}]  &= \frac{1-\theta}{3}\\
\mathsf{Prob}[f(2) = \MS{c}]  &= \frac{1-\theta}{3}.
\end{align*}
We have $Y_1 = \{ \MS{a},\MS{b},\MS{c}\}$. Hence, 
\[ q^* = \mathsf{Prob}[f(2) \in Y_1] 
= \frac{\theta}{3} + 2\left(\frac{1-\theta}{3}\right) = \frac{2 - \theta}{3}.\]
\end{example}

\medskip

We now investigate conditions (in a pure LDP protocol) under which $q^*$ is independent of the choices of $x$ and $x'$. We will require that this property holds for all possible values of $\theta$. Denote $|X| = n$ and $|Y| = m$.
For convenience, define $\ell_x = |Y_x|$ for all $x$ and define
$\mu_{x,x'} = |Y_x \cap Y_{x'}|$ for all $x \neq x'$.
Then the value
\begin{equation}
\label{q.eq} q^* 
= \mu_{x,x'} \left(\frac{\theta}{\ell_x}\right) 
+ (\ell_x - \mu_{x,x'}) \left(\frac{1-\theta}{m - \ell_x}\right)
\end{equation}
should be independent of the the choices of $x$, $x'$ (where $x \neq x'$) and $\theta$.
We can rewrite (\ref{q.eq}) as follows:
\begin{equation}
\label{q2.eq}
 q^* 
 = \theta \left( \frac{\mu_{x,x'}}{\ell_{x}}   - \frac{\ell_x - \mu_{x,x'}}{m - \ell_{x}} \right)
+ \frac{\ell_x - \mu_{x,x'}}{m - \ell_{x}}\end{equation}
for all $x \neq x'$.

Recall that we require that (\ref{q2.eq}) holds for all values of $\theta$.
Since $q^*$ is a linear function of $\theta$, we require that 
the two ``coefficients''  
\[  \frac{\mu_{x,x'}}{\ell_{x}}   - \frac{\ell_x - \mu_{x,x'}}{m - \ell_{x}} \quad \text{and} \quad \frac{\ell_x - \mu_{x,x'}}{m - \ell_{x}}\]
are independent of $x$ and $x'$. That is, there are constants $c_1$ and $c_2$ such that
\begin{equation}
\label{eq1}
c_1 = \frac{\ell_x - \mu_{x,x'}}{m - \ell_{x}}
\end{equation}
and 
\begin{equation}
\label{eq2}
c_2 = \frac{\mu_{x,x'}}{\ell_{x}}   - \frac{\ell_x - \mu_{x,x'}}{m - \ell_{x}}
\end{equation}
for all $x \neq x'$.

It follows immediately from (\ref{eq1}) and (\ref{eq2}) that 
\[ c_1+c_2 = \frac{\mu_{x,x'}}{\ell_{x}}
\]
for all $x \neq x'$. Hence, for all $x \neq x'$, we have 
\[ \frac{\mu_{x,x'}}{\ell_x} = \frac{\mu_{x,x'}}{\ell_{x'}},\]
so $\ell_x = \ell_{x'}$. Therefore $\ell_x = \ell$ for all $x$, where $\ell$ is a constant.

Substituting $\ell = \ell_x = \ell_{x'}$ into (\ref{eq1}), we see that
\[ c_1 = \frac{\ell - \mu_{x,x'}}{m - \ell}\] for all $x \neq x'$.
So all the values $\mu_{x,x'}$ are constant, i.e., $\mu = \mu_{x,x'}$ for all $x \neq x'$, where $\mu$ is a constant.

\medskip

So far, we have proven that the set system 
consisting of the $m$ points in $Y$ and the $n$ blocks $Y_x$ ($x \in X$)
satisfies the following two properties:
\begin{enumerate}
\item every block has size $\ell$
\item the intersection of any two distinct blocks has cardinality $\mu$.
\end{enumerate}
Therefore, 
we have the following restatement of (\ref{q*.eq}) when the set system $(Y,\mathcal{Y})$ satisfies the two properties above.
\begin{equation}
\label{q*2.eq}
q^* = \frac{\theta \mu}{\ell} 
+  \frac{(1-\theta)(\ell - \mu)}{m - \ell}.
\end{equation}

\medskip

Now let's consider the dual set system $(X,\mathcal{B})$. %
Since $(X,\mathcal{B})$ is the dual set system to $(Y,\mathcal{Y})$,  it satisfies the following properties
\begin{enumerate}
\item every point occurs in exactly $\ell$ blocks
\item every pair of distinct points is contained in exactly $\mu$ blocks.
\end{enumerate}
This type of combinatorial design is often termed an $(r,\lambda)$-design (see \cite[\S VI.49]{CD})
where, in this case, $r = \ell$ and $\lambda = \mu$. 
Note that an $(r,\lambda)$-design is a 
BIBD if every block contains the same number of points. 
We note that $(r,\lambda)$-designs were suggested for use as pure LDP schemes in \cite{PNL}, where they were termed
``regular and pairwise-balanced designs.''

\medskip
Let's look at a small $(r,\lambda)$-design that is not a BIBD. 

\begin{example}
\label{(3,1).exam}
We delete a point from a 
$(7,3,1)$-BIBD, creating the following $(3,1)$-design on $m=6$  points and $n=7$ blocks.
\begin{align*}
X &= \{1, \dots , 6\}\\
\mathcal{B} &= \{ \{1,2,4\}, \{2,3,5\}, \{3,4,6\}, \{4,5\}, 
\{5,6,1\}, \{6,2\}, \{1,3\} \}.
\end{align*}
The dual design is
\begin{align*}
Y &= \{\MS{a},\MS{b},\MS{c},\MS{d},\MS{e},\MS{f},\MS{g}\}\\
\mathcal{Y} &= \{ \{\MS{a},\MS{e},\MS{g}\}, \{\MS{a},\MS{b},\MS{f}\}, \{\MS{b},\MS{c},\MS{g}\}, 
\{\MS{a},\MS{c},\MS{d}\}, 
\{\MS{b},\MS{d},\MS{e}\}, \{\MS{c},\MS{e},\MS{f}\}\}.
\end{align*}
It can be seen that all blocks in $\mathcal{Y}$ have cardinality three and any two distinct blocks intersect in exactly one point.

For this $(3,1)$-design, the formula (\ref{q*2.eq}) yields
\[ q^* = \frac{\theta }{3} 
+  \frac{2(1-\theta)}{3} = \frac{2-\theta}{3}.
\] 
\end{example}

\bigskip

We can summarize the above discussion as follows.

\begin{theorem}
The following are equivalent:
\begin{enumerate}
\item The perturbation function $f$ based on the set system $(Y,\mathcal{Y})$ is a pure LDP scheme.
\item $(Y,\mathcal{Y})$ has constant block size $\ell$ and every pair of blocks intersects in exactly $\mu$ points.
\item The dual design $(X,\mathcal{B})$ is an $(r,\lambda)$-design, where $r = \ell$ and $\lambda = \mu$.
\end{enumerate}
\end{theorem}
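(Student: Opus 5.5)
I will prove the cycle of implications $1 \Rightarrow 2 \Rightarrow 1$ and $2 \Leftrightarrow 3$. Throughout, ``pure'' is understood as in the discussion preceding the statement: $p^*$ and $q^*$ must be constants, and $q^*$ must be independent of $x$, $x'$ \emph{and} of $\theta$. The identity $p^*=\theta$ from (\ref{p*.eq}) already makes $p^*$ constant for free.

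For $1 \Rightarrow 2$ I will retrace the argument above. Starting from (\ref{q2.eq}), I view $q^*$ as an affine function of $\theta$. Requiring it to be the same for every $\theta$ forces both coefficients to be constants $c_1$ and $c_2$, as in (\ref{eq1}) and (\ref{eq2}). Adding them gives $c_1+c_2=\mu_{x,x'}/\ell_x$ for every ordered pair $x\neq x'$. Since $\mu_{x,x'}=\mu_{x',x}$, this yields $\mu_{x,x'}/\ell_x=\mu_{x,x'}/\ell_{x'}$.

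\textbf{The main obstacle} is that dividing out $\mu_{x,x'}$ requires $\mu_{x,x'}\neq 0$. I would handle this through the constant $c_1+c_2$.
\begin{itemize}
\item If $c_1+c_2\neq 0$, then every $\mu_{x,x'}>0$, and the cancellation gives $\ell_x=\ell_{x'}$ directly.
\item If $c_1+c_2=0$, then every $\mu_{x,x'}=0$, so all intersections are already constant. Equation (\ref{eq1}) then reads $\ell_x/(m-\ell_x)=c_1$, and $t\mapsto t/(m-t)$ is strictly increasing, so again all $\ell_x$ are equal.
\end{itemize}
In either case $\ell_x=\ell$ for all $x$. Substituting back into (\ref{eq1}) gives $\mu_{x,x'}=\ell-c_1(m-\ell)$, a constant $\mu$. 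This establishes statement 2.

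For $2 \Rightarrow 1$, I substitute the constant block size $\ell$ and constant intersection size $\mu$ into (\ref{q*.eq}). This gives formula (\ref{q*2.eq}), which depends only on $\theta,\ell,\mu,m$ and not on $x$ or $x'$. I then check the requirement $p^*>q^*$: it amounts to $\theta>\theta\mu/\ell+(1-\theta)(\ell-\mu)/(m-\ell)$. I would verify this holds for the relevant range of $\theta$, namely whenever $\alpha_1>\alpha_2$, i.e.\ $\theta/\ell>(1-\theta)/(m-\ell)$. Multiplying that inequality by $\ell-\mu\ge 0$ gives exactly the needed comparison after rearranging. In the degenerate case $\ell=\mu$, equality can occur, and I would note it as excluded.

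For $2 \Leftrightarrow 3$ I will use duality: $x\in B_y$ if and only if $y\in Y_x$. Hence the number of blocks of $\mathcal{B}$ containing $x$ is $|Y_x|$, and the number of blocks of $\mathcal{B}$ containing both $x$ and $x'$ is $|Y_x\cap Y_{x'}|$. So constant block size $\ell$ and constant pairwise intersection $\mu$ in $(Y,\mathcal{Y})$ are literally the replication number $r=\ell$ and the pair-count $\lambda=\mu$ of $(X,\mathcal{B})$. By the definition of an $(r,\lambda)$-design, this is exactly statement 3, and the argument reverses verbatim.
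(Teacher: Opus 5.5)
Your argument follows the paper's route: $q^*$ is affine in $\theta$, you introduce the constants $c_1,c_2$, use $c_1+c_2=\mu_{x,x'}/\ell_x$, and finish with duality. Your case split on $c_1+c_2$ and your check of $p^*>q^*$ via $\alpha_1>\alpha_2$ repair two things the paper passes over: it cancels $\mu_{x,x'}$ without comment, and it never verifies $p^*>q^*$ in $2\Rightarrow 1$. A small wording point: what you actually use is that, for each $\theta$, $q^*$ does not depend on the ordered pair $(x,x')$; $q^*$ itself does depend on $\theta$, as (\ref{q*2.eq}) shows.

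The genuine problem is a premise you share with the paper: formula (\ref{q*.eq}) has the wrong denominators. Since $q^*=\mathsf{Prob}[f(x')\in Y_x]$, and $f(x')$ puts mass $\theta/|Y_{x'}|$ on each point of $Y_{x'}$ and $(1-\theta)/|Y\setminus Y_{x'}|$ on each other point, the correct expression is
\[
q^*_{x,x'}=\frac{\mu_{x,x'}\,\theta}{\ell_{x'}}+\frac{(\ell_x-\mu_{x,x'})(1-\theta)}{m-\ell_{x'}}.
\]
This agrees with (\ref{q.eq}) only once you already know $\ell_x=\ell_{x'}$, which is exactly what $1\Rightarrow 2$ is supposed to establish.

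With the corrected formula, two parts of your proof survive:
\begin{itemize}
\item Your branch $c_1+c_2\neq 0$ still works: now $\mu_{x,x'}=(c_1+c_2)\ell_{x'}$, and swapping $x,x'$ gives $\mu_{x,x'}=(c_1+c_2)\ell_x$.
\item The direction $2\Rightarrow 1$ is unaffected, because the two formulas coincide under constant block size.
\end{itemize}
Your branch $c_1+c_2=0$ does not survive. There, (\ref{eq1}) becomes $\ell_x/(m-\ell_{x'})=c_1$ for all ordered pairs, so the monotonicity of $t\mapsto t/(m-t)$ no longer applies.

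For $n\ge 3$ you can recover the conclusion. Fix $x$ and use $c_1>0$: then $m-\ell_{x'}=\ell_x/c_1$ for every $x'\ne x$, so all such $\ell_{x'}$ are equal. Given any two points, a third point distinct from both then shows their block sizes agree.

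For $n=2$ the equation only yields $\ell_1=\ell_2$ or $\ell_1+\ell_2=m$, and the implication is actually false. Take $Y=\{\MS{a},\MS{b},\MS{c}\}$, $Y_1=\{\MS{a}\}$ and $Y_2=\{\MS{b},\MS{c}\}$. For every $\theta$,
\[
\mathsf{Prob}[f(1)\in Y_1]=\mathsf{Prob}[f(2)\in Y_2]=\theta
\quad\text{and}\quad
\mathsf{Prob}[f(2)\in Y_1]=\mathsf{Prob}[f(1)\in Y_2]=1-\theta.
\]
So the scheme is pure (indeed $\epsilon$-LDP) for every $\theta>1/2$, yet the block sizes are $1$ and $2$. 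Hence $1\Rightarrow 2$ needs an extra hypothesis such as $n\ge 3$. Your disjoint-blocks case must be argued from the corrected formula, not from (\ref{eq1}) as printed.
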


Observe that $\alpha_1 = \alpha_{x,1}$ and $\alpha_2 = \alpha_{x,2}$ are constants (independent of $x$) in a pure LDP scheme.
The values of the parameters of an LDP protocol based on an $(r,\lambda)$-design are summarized as follows.

\begin{theorem}
\label{BIBD.thm}
An LDP protocol based on an $(r,\lambda)$-design (which includes $(v,b,r,k,\lambda)$-BIBDs as special cases) has the following parameters:
\[
\begin{array}{l}
|X| = v \quad \text{and} \quad  |Y| = b\vspace{.1in}\\
\theta = \frac{r e^{\epsilon}}{b  + r (e^{\epsilon} - 1)}\vspace{.1in}\\
\alpha_1 = \frac{\theta}{r} = \frac{e^{\epsilon}}{b  + r (e^{\epsilon} - 1)}
\quad \text{and} \quad
\alpha_2 = \frac{1- \theta}{b  -r} = \frac{1}{b  + r (e^{\epsilon} - 1)}\vspace{.1in}\\
\label{Pstarqstar.eq}
p^* = \theta \quad \text{and} \quad q^* = \frac{\theta \lambda}{r} 
+  \frac{(1-\theta)(r - \lambda)}{b-r}.
\end{array} 
\]
\end{theorem}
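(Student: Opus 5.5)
The plan is to specialise the general formulas already derived to the parameters of an $(r,\lambda)$-design on $v$ points with $b$ blocks. I take the design $(X,\BB)$ with $|X| = v$ and $|\BB| = b$, so that $n = v$ and $m = b$. Passing to the dual set system $(Y,\YY)$ via the correspondence $x \in B_y \iff y \in Y_x$, each point $x$ lies in exactly $r$ blocks, so $|Y_x| = r$ for every $x$. Likewise each pair $x \neq x'$ lies in exactly $\lambda$ blocks, so $|Y_x \cap Y_{x'}| = \lambda$. This is the content of the preceding equivalence theorem, applied with $\ell = r$ and $\mu = \lambda$.

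Next I substitute $|Y_x| = r$ and $|Y| = b$ into (\ref{TP.eq}). This gives $\alpha_{x,1} = \theta/r$ and $\alpha_{x,2} = (1-\theta)/(b-r)$, exactly as in (\ref{alpha_1.eq}) and (\ref{alpha_2.eq}), and both are independent of $x$. The derivation of $\theta$, $\alpha_1$ and $\alpha_2$ in Section~\ref{BDRR.sec} used only constant $|Y_x| = r$ and $|Y| = b$, never constant block size $k$. Hence it carries over verbatim to the present setting.

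The key step is the privacy condition. Imposing equality in (\ref{ep2.eq}) gives $\theta(b-r) = e^{\epsilon} r(1-\theta)$. Solving the linear equation for $\theta$ yields $\theta = re^{\epsilon}/(b + r(e^{\epsilon}-1))$. Dividing by $r$ gives $\alpha_1$. Computing $1-\theta = (b-r)/(b+r(e^{\epsilon}-1))$ and dividing by $b-r$ gives $\alpha_2$. This is routine algebra, provided $0 < r < b$ so that the denominators are nonzero.

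Finally, $p^* = \theta$ holds by (\ref{p*.eq}). For $q^*$ I substitute $\ell = r$, $\mu = \lambda$ and $m = b$ into (\ref{q*2.eq}), obtaining the stated formula. There is no real obstacle. The one point that needs checking is that nothing in Section~\ref{BDRR.sec} relied on the BIBD block-size condition, and inspection of (\ref{ep2.eq})--(\ref{p_2.eq}) confirms that it did not.
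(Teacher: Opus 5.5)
Your proposal is correct and follows the paper's reasoning. The theorem is simply a summary of (\ref{alpha_1.eq})--(\ref{p_2.eq}) under equality in (\ref{ep2.eq}), together with (\ref{p*.eq}) and (\ref{q*2.eq}) specialised to $\ell = r$, $\mu = \lambda$, $m = b$. Your remarks that the Section~\ref{BDRR.sec} derivation uses only $|Y_x| = r$ and $|Y| = b$ (never constant block size $k$), and that $0 < r < b$ is needed, supply exactly the justification the paper leaves implicit.
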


\medskip

From the equations listed in Theorem \ref{BIBD.thm}, the following can be verified easily.
\[
\begin{array}{l} 
p^* = \alpha_1 r\vspace{.1in}\\
q^* = \alpha_1 \lambda + \alpha_2 (r - \lambda)\vspace{.1in}\\
p^* - q^* = (r - \lambda)(\alpha_1 - \alpha_2).
\end{array}
\]
Using the above equations, we can rewrite (\ref{TPM.eq}) as
\begin{equation}
\label{TPM2.eq}
Q = \left( \frac{p^*-q^*}{r-\lambda}\right)  A + \left( \frac{p^*}{r} - \frac{p^*-q^*}{r-\lambda} \right) J_{b \times v}.
\end{equation}

\section{Estimators}
\label{est.sec}

Suppose we denote $X = \{x_1, \dots , x_n\}$ and $Y = \{y_1, \dots , y_m\}$. We are given a set of randomised values which arise from an unknown probability distribution $\pi$ on 
$X$ after being modified by a TPM, say $Q$. We want to estimate this probability distribution
$\pi = (p_1, \dots, p_n)$, where \[p_j = \mathsf{Prob}[X = x_j],\] for $1 \leq j \leq n$.

Associated with the TPM $Q$, we have the two dual designs $(X, \mathcal{B})$ and  $(Y, \mathcal{Y})$.
For now, suppose $(X, \mathcal{B})$ is an $(r,\lambda)$-design (or a $(v,b,r,k,\lambda)$-BIBD) with $n = v$ and $m = b$.
We denote the blocks in $\mathcal{B}$ by $B_1, \dots , B_m$.
The blocks in $(Y, \mathcal{Y})$ are denoted $Y_1, \dots , Y_n$. 
Each block $Y_i$ has cardinality $r$.

Suppose that we have $t$ perturbed values (samples) and we want to estimate $p_j$.
We can tally the number of occurrences of points in $Y_j$ among the $t$ samples. Call this quantity
$T_j$.
Then (as discussed in \cite{WBLJ}, for example), an unbiased estimator for $p_j$ is given by the formula
\begin{equation}
\label{estimator}
\tilde{p}_j = \frac{T_j -tq^*}{t(p^* - q^*)}.
\end{equation}

It may be helpful to provide an intuitive explanation of this formula.
If the matrix $Q$ was just the identity matrix, then $T_i$ would be the number of occurrences of $x_i$, and the sample mean $T_i/t$ would be an unbiased estimator for the population mean $p_i$.
 
When we apply a TPM for a pure protocol, then a single trial contributes $+1$ to $T_i$ with probability 
$p \, p_i+q(1-p_i)=(p^*-q^*)p_i+ q^*$; 
otherwise, it contributes $0$.  So we can view this quantity as the new population mean, and $T_i/t$ is an unbiased estimator for it; i.e., 
\begin{equation}
\label{est.eq}
E\left[\frac{T_i}{t}\right]= (p^*-q^*)p_i+q^*.
\end{equation} 
We want an unbiased estimator for $p_i$, so we can use linearity of expectation. 
Hence, it follows from (\ref{est.eq}) that
\[E \left[ \frac{\frac{T_i}{t}-q^*}{p^*-q^*} \right] =p_i,\]
which is equivalent to (\ref{estimator}).
We’re in effect subtracting the expected contribution arising from ``incorrect'' samples that aren’t $x_i$, and then scaling the remaining contribution from the ``correct'' samples $x_i$ to account for the fact that they 
don't always contribute $+1$ to $T_i$.

\medskip

Suppose we denote the frequencies of the (perturbed) samples by
$f_1, \dots , f_m$, where $f_i$ denotes the number of occurrences of $y_i$, for $1 \leq i \leq m$.
Then 
\[ T_j = \sum_{y_i \in Y_j} f_i.\]
We can rewrite (\ref{estimator}) as follows:
\begin{equation}
\label{estimator2}
\tilde{p}_j  = \frac{\sum_{y_i \in Y_j} f_i -tq^*}{t(p^* - q^*)}.
\end{equation}
Of course we can estimate the entire probability distribution on $X$ by computing $\tilde{p}_j$ for  $j= 1, \dots , n$.

\medskip

Here is an example. 

\begin{example}
\label{n=2.exam}
We use the designs from Example \ref{E1.exam}.

Suppose $\Theta = p^* = 3/4$.
Then we have 
$q^* =  {5}/{12}$ from (\ref{q*2.eq}).
Suppose we have $t = 18$ samples (consisting of values $a,b,c,d,e,f$), where 
$f_\MS{a} = f_\MS{b} =  4$, $f_\MS{c} = f_\MS{d} = 2$ and $f_\MS{e} = f_\MS{f} = 3$.
Then $T_1 = 4 + 4 + 2 = 10$. 

Our estimate of $\tilde{p}_1$ is computed using (\ref{estimator2}) as follows:
\begin{align*} \tilde{p}_1 &= \frac{10 - 18(\frac{5}{12})}{18(\frac{3}{4} - \frac{5}{12})}\\
&= \frac{10 - \frac{15}{2}}{18(\frac{1}{3})}\\
&= \frac{5}{12}.
\end{align*}
In a similar fashion, we can compute
$\tilde{p}_2 = \tilde{p}_3 = 1/4$ and $\tilde{p}_4 = 1/12$.
\end{example}



Estimators can be investigated in a more general setting, e.g., as is done in \cite{CN}.
Suppose we start with $Q$, which is an \emph{arbitrary} $m$ by $n$ TPM with $m \geq n$. We assume that $\mathsf{rank}(Q) = n$ (i.e., the columns of $Q$ are linearly independent). It is fairly easy to show that an unbiased linear estimator for $Q$ is any $n$ by $m$ matrix $L$ such that $LQ = I_{n \times n}$ (i.e., $L$ is a \emph{left inverse} of $Q$). If $n = m$, then $L$ is unique and $L = Q^{-1}$. However, if $n < m$, then there are many possible left inverses of $Q$ and hence there are many possible linear estimators. 

The estimator defined in (\ref{estimator2}) can  easily be written in matrix form. 
For $1 \leq i \leq m$, 
denote $\mathbf{f} = (f_1, \dots , f_m)$ and let  
$\hat{\rho} = (\frac{1}{t} \mathbf{f})^T$. Note that $\hat{\rho}$ is a column vector and $\hat{\rho}_i = f_i /t$ for all $i$.
Now we use  the fact that
$\sum f_i = t$ to rewrite (\ref{estimator2}):
\begin{align}
\nonumber \tilde{p}_j  &= \frac{\sum_{y_i \in Y_j} f_i -  q^* \sum_{i=1}^m f_i}{t(p^* - q^*)}\\
\nonumber &= \frac{\sum_{y_i \in Y_j} (1 - q^*)f_i -   \sum_{y_i \not\in Y_j} q^*f_i}{t(p^* - q^*)}\\
\label{Lformula.eq} &= \frac{\sum_{y_i \in Y_j} (1 - q^*)\hat{\rho}_i -   \sum_{y_i \not\in Y_j} q^* \hat{\rho}_i}{p^* - q^*}.
\end{align}
Define
\begin{align}
\label{gamma1.eq} \gamma_1 &= \frac{1 - q^*}{p^* - q^*}\\
\label{gamma2.eq} \gamma_2 &= \frac{-q^*}{p^* - q^*}.
\end{align}
Then it is clear from (\ref{Lformula.eq}) that the associated estimator is
\begin{equation}
\label{L.eq}
L = \gamma_1 A^T + \gamma_2 (J_{v \times b} - A^T) = A^T (\gamma_1 - \gamma_2) + \gamma_2 J_{v \times b}.
\end{equation}

\begin{example}
Suppose $A$ is the following $12$ by $9$ incidence matrix of a $(9,12,4,3,1)$-BIBD:
\[A = 
\left( 
\begin{array}{ccccccccc}
1& 1& 1& 0& 0& 0& 0& 0& 0\\
0& 0& 0& 1& 1& 1& 0& 0& 0\\
0& 0& 0& 0& 0& 0& 1& 1& 1\\
1& 0& 0& 1& 0& 0& 1& 0& 0\\
0& 1& 0& 0& 1& 0& 0& 1& 0\\
0& 0& 1& 0& 0& 1& 0& 0& 1\\
1& 0& 0& 0& 1& 0& 0& 0& 1\\
0& 1& 0& 0& 0& 1& 1& 0& 0\\
0& 0& 1& 1& 0& 0& 0& 1& 0\\
1& 0& 0& 0& 0& 1& 0& 1& 0\\
0& 1& 0& 1& 0& 0& 0& 0& 1\\
0& 0& 1& 0& 1& 0& 1& 0& 0
\end{array}
\right).
\] 
If we set $\theta = 3/4$, then $\alpha_1 = 3/16$ and $\alpha_2 = 1/32$.  Note that the differential privacy ratio $\alpha_1/ \alpha_2 = 6$, so $\epsilon = \ln 6$.
The resulting TPM is 
\[Q = 
\left( 
\begin{array}{ccccccccc}
\frac{3}{16} &  \frac{3}{16} &  \frac{3}{16} &  \frac{1}{32} &  \frac{1}{32} &  \frac{1}{32} &  \frac{1}{32} &  \frac{1}{32} &  \frac{1}{32} \vspace{.05in}\\ 
\frac{1}{32} &  \frac{1}{32} &  \frac{1}{32} &  \frac{3}{16} &  \frac{3}{16} &  \frac{3}{16} &  \frac{1}{32} &  \frac{1}{32} &  \frac{1}{32}\vspace{.05in}\\ 
\frac{1}{32} &  \frac{1}{32} &  \frac{1}{32} &  \frac{1}{32} &  \frac{1}{32} &  \frac{1}{32} &  \frac{3}{16} &  \frac{3}{16} &  \frac{3}{16}\vspace{.05in}\\ 
\frac{3}{16} &  \frac{1}{32} &  \frac{1}{32} &  \frac{3}{16} &  \frac{1}{32} &  \frac{1}{32} &  \frac{3}{16} &  \frac{1}{32} &  \frac{1}{32}\vspace{.05in}\\ 
\frac{1}{32} &  \frac{3}{16} &  \frac{1}{32} &  \frac{1}{32} &  \frac{3}{16} &  \frac{1}{32} &  \frac{1}{32} &  \frac{3}{16} &  \frac{1}{32}\vspace{.05in}\\ 
\frac{1}{32} &  \frac{1}{32} &  \frac{3}{16} &  \frac{1}{32} &  \frac{1}{32} &  \frac{3}{16} &  \frac{1}{32} &  \frac{1}{32} &  \frac{3}{16}\vspace{.05in}\\ 
\frac{3}{16} &  \frac{1}{32} &  \frac{1}{32} &  \frac{1}{32} &  \frac{3}{16} &  \frac{1}{32} &  \frac{1}{32} &  \frac{1}{32} &  \frac{3}{16}\vspace{.05in}\\ 
\frac{1}{32} &  \frac{3}{16} &  \frac{1}{32} &  \frac{1}{32} &  \frac{1}{32} &  \frac{3}{16} &  \frac{3}{16} &  \frac{1}{32} &  \frac{1}{32}\vspace{.05in}\\ 
\frac{1}{32} &  \frac{1}{32} &  \frac{3}{16} &  \frac{3}{16} &  \frac{1}{32} &  \frac{1}{32} &  \frac{1}{32} &  \frac{3}{16} &  \frac{1}{32}\vspace{.05in}\\ 
\frac{3}{16} &  \frac{1}{32} &  \frac{1}{32} &  \frac{1}{32} &  \frac{1}{32} &  \frac{3}{16} &  \frac{1}{32} &  \frac{3}{16} &  \frac{1}{32}\vspace{.05in}\\ 
\frac{1}{32} &  \frac{3}{16} &  \frac{1}{32} &  \frac{3}{16} &  \frac{1}{32} &  \frac{1}{32} &  \frac{1}{32} &  \frac{1}{32} &  \frac{3}{16}\vspace{.05in}\\ 
\frac{1}{32} &  \frac{1}{32} &  \frac{3}{16} &  \frac{1}{32} &  \frac{3}{16} &  \frac{1}{32} &  \frac{3}{16} &  \frac{1}{32} &  \frac{1}{32}
\end{array}
\right).
\]
We have $p^* = 3/4$ and $q^* = 9/32$ from Theorem \ref{BIBD.thm}. Then $\gamma_1 = 23/15$ and $\gamma_2 = -3/5$ from (\ref{gamma1.eq}) and (\ref{gamma2.eq}). The estimator given by (\ref{L.eq}) is 
\[
L = 
\left( 
\begin{array}{rrrrrrrrrrrr}
\frac{23}{15} &  -\frac{3}{5} &  \frac{3}{5} &  \frac{23}{15} &  -\frac{3}{5} &  -\frac{3}{5} &  \frac{23}{15} &  -\frac{3}{5} &  -\frac{3}{5} &  \frac{23}{15} &  -\frac{3}{5} &  -\frac{3}{5}\vspace{.05in}\\ 
\frac{23}{15} &  -\frac{3}{5} &  -\frac{3}{5} &  -\frac{3}{5} &  \frac{23}{15} &  -\frac{3}{5} &  -\frac{3}{5} &  \frac{23}{15} &  -\frac{3}{5} &  -\frac{3}{5} &  \frac{23}{15} &  -\frac{3}{5}\vspace{.05in}\\ 
\frac{23}{15} &  -\frac{3}{5} &  -\frac{3}{5} &  -\frac{3}{5} &  -\frac{3}{5} &  \frac{23}{15} &  -\frac{3}{5} &  -\frac{3}{5} &  \frac{23}{15} &  -\frac{3}{5} &  -\frac{3}{5} &  \frac{23}{15}\vspace{.05in}\\ 
-\frac{3}{5} &  \frac{23}{15} &  -\frac{3}{5} &  \frac{23}{15} &  -\frac{3}{5} &  -\frac{3}{5} &  -\frac{3}{5} &  -\frac{3}{5} &  \frac{23}{15} &  -\frac{3}{5} &  \frac{23}{15} &  -\frac{3}{5}\vspace{.05in}\\ 
-\frac{3}{5} &  \frac{23}{15} &  -\frac{3}{5} &  -\frac{3}{5} &  \frac{23}{15} &  -\frac{3}{5} &  \frac{23}{15} &  -\frac{3}{5} &  -\frac{3}{5} &  -\frac{3}{5} &  -\frac{3}{5} &  \frac{23}{15}\vspace{.05in}\\ 
-\frac{3}{5} &  \frac{23}{15} &  -\frac{3}{5} &  -\frac{3}{5} &  -\frac{3}{5} &  \frac{23}{15} &  -\frac{3}{5} &  \frac{23}{15} &  -\frac{3}{5} &  \frac{23}{15} &  -\frac{3}{5} &  -\frac{3}{5}\vspace{.05in}\\ 
-\frac{3}{5} &  -\frac{3}{5} &  \frac{23}{15} &  \frac{23}{15} &  -\frac{3}{5} &  -\frac{3}{5} &  -\frac{3}{5} &  \frac{23}{15} &  -\frac{3}{5} &  -\frac{3}{5} &  -\frac{3}{5} &  \frac{23}{15}\vspace{.05in}\\ 
-\frac{3}{5} &  -\frac{3}{5} &  \frac{23}{15} &  -\frac{3}{5} &  \frac{23}{15} &  -\frac{3}{5} &  -\frac{3}{5} &  -\frac{3}{5} &  \frac{23}{15} &  \frac{23}{15} &  -\frac{3}{5} &  -\frac{3}{5}\vspace{.05in}\\ 
-\frac{3}{5} &  -\frac{3}{5} &  \frac{23}{15} &  -\frac{3}{5} &  -\frac{3}{5} &  \frac{23}{15} &  \frac{23}{15} &  -\frac{3}{5} &  -\frac{3}{5} &  -\frac{3}{5} &  \frac{23}{15} &  -\frac{3}{5}\vspace{.05in}
\end{array}
\right).
\]
\end{example}

\subsection{Estimators and Moore-Penrose Left Inverses}

Suppose $Q$ is an $m$ by $n$ matrix (where $m > n$) with linearly independent columns. Among the left inverses of $Q$  is a special one known as the \emph{Moore-Penrose inverse}. The Moore-Penrose inverse of $Q$ is denoted by $Q^+$ and it is given by the following formula:
\begin{equation}
\label{MPI.eq}
Q^+ = (Q^T Q)^{-1} Q^T
.\end{equation} 
It is not hard to see that $Q^+ Q = I_{n \times n}$.

We will prove that the estimator given by equation (\ref{L.eq}) is in fact just the Moore-Penrose inverse $Q^+$ when $Q$ is the TPM arising from a BIBD. 

We state three useful lemmas. 
\begin{lemma}
\label{properties.lem}
Suppose $A$ is the $b$ by $v$ incidence matrix of a 
$(v,b,r,k,\lambda)$-BIBD. Then 
\begin{align*}
A^T A &= (r - \lambda)I_{v \times v} + \lambda J_{v \times v} \\
A^T J_{b \times v} &= J_{v \times b}A =  r J_{v \times v}\\
J_{v \times v} A^T  &= k J_{v \times b}\\
J_{v \times b}J_{b \times v} &= b J_{v \times v}.
\end{align*}
\end{lemma}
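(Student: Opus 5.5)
Each identity follows from reading off the entries of the matrix product in terms of the incidence structure; the BIBD axioms supply the needed constants. Throughout, the $(B,x)$-entry of $A$ is $1$ if $x \in B$ and $0$ otherwise. The rows of $A$ are indexed by the $b$ blocks and the columns by the $v$ points.

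\emph{First identity.} The $(x,x')$-entry of $A^T A$ is $\sum_{B} A_{B,x}A_{B,x'}$. This sum counts the blocks containing both $x$ and $x'$.
\begin{itemize}
\item When $x = x'$, it counts the blocks containing $x$, which is $r$.
\item When $x \neq x'$, it counts the blocks containing the pair, which is $\lambda$.
\end{itemize}
So $A^T A$ has $r$ on the diagonal and $\lambda$ off it. That is exactly $(r-\lambda)I_{v\times v} + \lambda J_{v\times v}$; this is also equation (\ref{BIBD.eq}) stated earlier.

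\emph{Second identity.} The $(x,x')$-entry of $A^T J_{b\times v}$ is $\sum_B A_{B,x}$, the number of blocks containing $x$, which is $r$. Hence $A^T J_{b\times v} = rJ_{v\times v}$. Since $J^T$ is again an all-ones matrix, transposing gives $J_{v\times b}A = (A^T J_{b\times v})^T = rJ_{v\times v}$. Alternatively, one can compute that the $(x,x')$-entry of $J_{v\times b}A$ is the column sum of column $x'$ of $A$, which is $r$.

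\emph{Third identity.} The $(x,B)$-entry of $J_{v\times v}A^T$ is $\sum_{x''} A_{B,x''}$. This is the row sum of row $B$ of $A$, i.e.\ $|B| = k$. The last identity is immediate, since each entry of $J_{v\times b}J_{b\times v}$ is a sum of $b$ ones.

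The only real care needed is bookkeeping. One must keep the dimensions of the various $J$'s straight and match each sum to the correct axiom: column sums give $r$, row sums give $k$, and pair counts give $\lambda$. There is no genuine obstacle.
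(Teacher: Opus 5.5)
Your proposal is correct and takes essentially the same approach as the paper, which cites equation (\ref{BIBD.eq}) for $A^TA$ and calls the remaining identities straightforward. Your entrywise counts (pairs give $\lambda$, column sums give $r$, row sums give $k$, and a sum of $b$ ones gives $b$) simply spell out those routine verifications.
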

\begin{proof}
The formula for $A^T A$ is just equation (\ref{BIBD.eq}). The remaining formulas are straightforward to prove.
\end{proof}


\begin{lemma}
\label{QQT.eq}
Suppose $A$ is the $b$ by $v$ incidence matrix of an $(r, \lambda)$-design (which includes the case of a 
$(v,b,r,k,\lambda)$-BIBD) and $Q = (\alpha_1 - \alpha_2) A_{b \times v} + \alpha_2 J_{b \times v}$.
Then
\begin{equation}
\label{Q.eq} Q^T Q = c I_{v \times v} + d J_{v \times v},
\end{equation}
where
\begin{align*}
c &= (r - \lambda)(\alpha_1 - \alpha_2)^2 \\
d &= \lambda (\alpha_1 - \alpha_2)^2 + 2r \alpha_2(\alpha_1 - \alpha_2) + {\alpha_2}^2 b.
\end{align*}
\end{lemma}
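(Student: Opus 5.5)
The plan is to expand $Q^T Q$ directly by bilinearity and simplify each of the four resulting products using identities that hold for any $(r,\lambda)$-design. Write $\beta = \alpha_1 - \alpha_2$, so $Q = \beta A + \alpha_2 J_{b\times v}$ and $Q^T = \beta A^T + \alpha_2 J_{v\times b}$. Then
\[
Q^T Q = \beta^2 A^T A + \beta\alpha_2 \left( A^T J_{b\times v} + J_{v\times b} A \right) + \alpha_2^2 J_{v\times b} J_{b\times v}.
\]

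The first key step is to check which parts of Lemma \ref{properties.lem} survive for an $(r,\lambda)$-design. That lemma is stated for BIBDs, but the three identities needed here never use constant block size $k$; only the identity $J_{v\times v}A^T = kJ_{v\times b}$ does, and it is not needed.

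The $(x,x')$ entry of $A^TA$ counts the blocks containing both $x$ and $x'$. This is $r$ on the diagonal and $\lambda$ off it, so $A^TA = (r-\lambda)I_{v\times v} + \lambda J_{v\times v}$.

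The $(x,x')$ entry of $A^T J_{b\times v}$ is the column sum of column $x$ of $A$, which is $r$. The same holds for $J_{v\times b}A$, so both equal $rJ_{v\times v}$.

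Finally, $J_{v\times b}J_{b\times v} = bJ_{v\times v}$ trivially. I will state these briefly, noting that they rely only on the replication number $r$ and pair count $\lambda$.

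Substituting gives
\[
Q^TQ = \beta^2(r-\lambda) I_{v\times v} + \left(\beta^2\lambda + 2r\alpha_2\beta + \alpha_2^2 b\right) J_{v\times v},
\]
which matches $c$ and $d$ exactly.

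There is no real obstacle. The only point needing care is justifying that Lemma \ref{properties.lem}'s relevant identities extend from BIBDs to $(r,\lambda)$-designs, which I will handle by the direct entrywise counting argument above rather than citing the lemma verbatim.
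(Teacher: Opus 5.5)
Your proof is correct and follows the paper's route: expand $Q^TQ$ bilinearly and simplify each product using the identities of Lemma \ref{properties.lem}. Your entrywise check that $A^TA$, $A^TJ_{b\times v}$, $J_{v\times b}A$ and $J_{v\times b}J_{b\times v}$ depend only on $r$, $\lambda$ and $b$, and not on a constant block size $k$, is a worthwhile refinement: the paper states Lemma \ref{properties.lem} only for BIBDs but applies it here to $(r,\lambda)$-designs.
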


\begin{proof}
The equation (\ref{Q.eq}) is easily proven by expanding the expression $Q^T Q$ using Lemma \ref{properties.lem}.
\end{proof}

\begin{lemma}
\label{inverse2.lem}
If $c \neq 0$  and $vd + c\neq 0$, then
\begin{equation}
\label{inverse2.eq}
(c I_{v \times v} + d J_{v \times v})^{-1} = c' I_{v \times v} + d' J_{v \times v},
\end{equation}
where
\[
c' = \frac{1}{c} 
\quad \text{and} \quad d' = -\frac{d}{c(vd + c)}.
\]
\end{lemma}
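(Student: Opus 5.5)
The plan is to verify the claimed formula directly by multiplication. Since the matrices are square, a one-sided inverse is automatically two-sided, so it suffices to show that
\[
(c I_{v \times v} + d J_{v \times v})(c' I_{v \times v} + d' J_{v \times v}) = I_{v \times v}.
\]
The only structural fact needed is $J_{v \times v}^2 = v J_{v \times v}$. This holds because each entry of $J_{v \times v}^2$ is a sum of $v$ products $1\cdot 1$; it is the square analogue of the last identity in Lemma \ref{properties.lem}. Also, $I$ and $J$ commute, so the order of the factors does not matter.

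Expanding the product gives
\[
cc'\, I_{v \times v} + (cd' + dc' + v\,dd')\, J_{v \times v}.
\]
First, $cc' = 1$ because $c' = 1/c$, which uses the hypothesis $c \neq 0$. The coefficient of $J_{v\times v}$ can be factored as
\[
cd' + dc' + v\,dd' = d' (c + vd) + \frac{d}{c}.
\]
Substituting $d' = -d/(c(vd+c))$, the first term becomes $-d/c$, which cancels the second term. This step uses the hypothesis $vd + c \neq 0$. Hence the product is $I_{v \times v}$.

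There is no real obstacle; the only care needed is in grouping the $J$ coefficient so the cancellation is visible. If one prefers to \emph{derive} $c'$ and $d'$ rather than verify them, make the ansatz $c' I + d' J$, expand the product as above, and equate coefficients. Since $I$ and $J$ are linearly independent for $v \ge 2$, this gives $cc' = 1$ and $cd' + dc' + v\,dd' = 0$, and solving these yields exactly the stated values. The hypotheses are also natural: $c$ and $c + vd$ are the eigenvalues of $cI + dJ$, with eigenvectors orthogonal to the all-ones vector and the all-ones vector itself respectively, so they are exactly the nonsingularity conditions.
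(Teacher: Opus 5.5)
Your direct verification is correct. It expands $(cI_{v \times v} + dJ_{v \times v})(c'I_{v \times v} + d'J_{v \times v})$ using $J_{v \times v}^2 = vJ_{v \times v}$ and groups the $J$-coefficient as $d'(c+vd) + d/c$, which is evidently the routine check intended: the paper states this lemma without proof, and it dismisses the neighbouring Lemma \ref{QQT.eq} as an easy expansion. One minor aside: your closing claim that $c \neq 0$ and $c+vd \neq 0$ are \emph{exactly} the nonsingularity conditions holds only for $v \ge 2$, since for $v=1$ only $c+d \neq 0$ matters, but this does not affect the proof.
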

\smallskip

\begin{theorem}
\label{MP.thm}
Suppose the TPM $Q$ is based on the $b$ by $v$ incidence matrix $A$ of a 
$(v,b,r,k,\lambda)$-BIBD. Then the  matrix $L$ defined in (\ref{L.eq}) is the Moore-Penrose inverse of $Q$.
\end{theorem}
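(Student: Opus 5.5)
The plan is to avoid inverting $Q^TQ$ explicitly. I will use the characterisation of $Q^+$ as the unique left inverse of $Q$ whose rows lie in the row space of $Q^T$. Suppose $L$ satisfies $LQ = I_{v\times v}$ and can be written as $L = MQ^T$ for some $v$ by $v$ matrix $M$. Then $MQ^TQ = I_{v \times v}$. By Lemma \ref{QQT.eq} and Lemma \ref{inverse2.lem}, $Q^TQ = cI_{v\times v}+dJ_{v\times v}$ is invertible, because $c=(r-\lambda)(\alpha_1-\alpha_2)^2>0$ (using $r>\lambda$ and $\alpha_1 > \alpha_2$) and $vd+c>0$. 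Hence $M = (Q^TQ)^{-1}$, and so $L = (Q^TQ)^{-1}Q^T = Q^+$ by (\ref{MPI.eq}). So two facts are needed: $LQ = I_{v\times v}$, and $L$ factors as $MQ^T$.

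\textbf{Step 1: $L$ is a left inverse.} I expand $LQ$ with $L = (\gamma_1-\gamma_2)A^T + \gamma_2 J_{v\times b}$ and $Q = (\alpha_1-\alpha_2)A + \alpha_2 J_{b\times v}$, and reduce every product with Lemma \ref{properties.lem}: $A^TA = (r-\lambda)I+\lambda J$, $A^TJ_{b\times v} = J_{v\times b}A = rJ_{v\times v}$, and $J_{v\times b}J_{b\times v} = bJ_{v\times v}$. The result has the form $xI_{v\times v} + yJ_{v\times v}$ with
\[ x=(\gamma_1-\gamma_2)(\alpha_1-\alpha_2)(r-\lambda). \]
Since $\gamma_1-\gamma_2 = 1/(p^*-q^*)$ and $p^*-q^* = (r-\lambda)(\alpha_1-\alpha_2)$, this gives $x=1$. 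For the $J$-coefficient $y$, I substitute $\gamma_2 = -q^*/(p^*-q^*)$, $q^* = \alpha_1\lambda+\alpha_2(r-\lambda)$, $p^* = \alpha_1 r$, and the column-sum identity $\alpha_1 r + \alpha_2(b-r) = 1$, and check that $y=0$. This is routine algebra; alternatively, it is already implied by the unbiasedness argument for (\ref{estimator}), since an unbiased linear estimator is a left inverse.

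\textbf{Step 2: $L$ lies in the right span.} I look for $M = xI_{v\times v}+yJ_{v\times v}$ with $MQ^T = L$. Using $Q^T = (\alpha_1-\alpha_2)A^T+\alpha_2J_{v\times b}$ together with $J_{v\times v}A^T = kJ_{v\times b}$ and $J_{v\times v}J_{v\times b} = vJ_{v\times b}$, I get
\[ MQ^T = x(\alpha_1-\alpha_2)A^T + \bigl(x\alpha_2 + y(k(\alpha_1-\alpha_2)+v\alpha_2)\bigr)J_{v\times b}. \]
Matching this with $L$ forces $x = (\gamma_1-\gamma_2)/(\alpha_1-\alpha_2)$. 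Then $y$ is determined by one linear equation, which is solvable because $k(\alpha_1-\alpha_2)+v\alpha_2>0$. The identity $J_{v\times v}A^T = kJ_{v\times b}$ holds only for constant block size, and this is exactly where the BIBD hypothesis, rather than a general $(r,\lambda)$-design, is used.

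\textbf{Main obstacle.} The main obstacle is bookkeeping: the scalar check $y=0$ in Step 1 requires the parameters $p^*,q^*,\gamma_i,\alpha_i$ to interlock correctly. I expect it to collapse once everything is expressed in $\alpha_1,\alpha_2$ and the identity $\alpha_1 r+\alpha_2(b-r)=1$ is applied.
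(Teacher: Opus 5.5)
Your proof is correct, but it runs the paper's argument in the opposite direction. The paper shows that $Q^+=(Q^TQ)^{-1}Q^T$ lies in $\mathrm{span}\{A^T,J_{v\times b}\}$, using the explicit form of $(Q^TQ)^{-1}$ from Lemma~\ref{inverse2.lem} together with $J_{v\times v}A^T=kJ_{v\times b}$. It then solves a $2\times 2$ linear system to show that $Q$ has only one left inverse in that span. The paper takes the fact that $L$ is a left inverse from the unbiasedness discussion rather than checking it. You instead use the same identity $J_{v\times v}A^T=kJ_{v\times b}$ to place $L$ in $\{MQ^T\}$. You then get uniqueness for free from the invertibility of $Q^TQ$, so you need only $c\neq 0$ and $vd+c\neq 0$, not the formulas for $c'$ and $d'$. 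The cost is that you must verify $LQ=I_{v\times v}$ directly. This is the same expansion as the paper's third step, specialised to $\gamma=\gamma_1-\gamma_2$ and $\delta=\gamma_2$. After multiplying by $p^*-q^*$, the $J$-coefficient becomes $q^*-q^*\bigl(\alpha_1 r+\alpha_2(b-r)\bigr)=0$, as you anticipate. Your version also isolates the BIBD hypothesis: the left-inverse check works for every $(r,\lambda)$-design, and constant block size is needed only to place $L$ in $\{MQ^T\}$. In exchange, the paper's version gives explicit coefficients for the unique left inverse in $\mathrm{span}\{A^T,J_{v\times b}\}$ for any $Q=\alpha A+\beta J_{b\times v}$.
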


\begin{proof}
The proof has three steps:
\begin{enumerate}
\item The matrix $L$ is a linear combination of $A^T$ and $J_{v \times b}$.
\item The Moore-Penrose left inverse of $Q$ is a linear combination of $A^T$ and $J_{v \times b}$.
\item The matrix $Q$ has a unique left inverse that is a linear combination of $A^T$ and $J_{v \times b}$.
\end{enumerate}
We note that step 1 follows immediately from (\ref{L.eq}).

To prove step 2, it follows from (\ref{MPI.eq}) that we need to consider the structure of 
$(Q^T Q)^{-1} Q^T$.
First, from equation (\ref{Q.eq}) in Lemma \ref{QQT.eq}, we have that $Q^T Q$ is a linear combination of 
$I_{v \times v}$ and $J_{v \times v}$.
Then, from Lemma \ref{inverse2.lem}, $(Q^T Q)^{-1}$ is also a linear combination of $I_{v \times v}$ and 
$J_{v \times v}$. It is clear from (\ref{TPM.eq}) that $Q^T$ is a linear combination of $A^T$ and 
$J_{v \times b}$.
Therefore, from Lemma \ref{properties.lem}, the matrix $Q^+ = (Q^T Q)^{-1} Q^T$
is a linear combination of $A^T$ and $J_{v \times b}$.

To prove step 3, denote $Q = \alpha A + \beta J_{b \times v}$ consider the equation 
\[ (\gamma A^T + \delta J_{v \times b})Q = I_{v\times v}.\]
Using Lemma \ref{properties.lem}, we have the following:
\begin{align*}
(\gamma A^T + \delta J_{v \times b})Q &= (\gamma A^T + \delta J_{v \times b})(\alpha A + \beta J_{b \times v})\\
&= \alpha \gamma A^TA + \beta \gamma A^T J_{b \times v} + \alpha \delta J_{v \times b} A +
\beta \delta J_{v \times b}J_{b \times v}\\
&= \alpha \gamma ((r - \lambda)I_{v \times v} + \lambda J_{v \times v}) +
r (\beta \gamma + \alpha \delta)\lambda J_{v \times v} + \beta \delta b J_{v \times v}\\
&= \alpha \gamma (r - \lambda)I_{v \times v} + (\alpha \gamma \lambda + r (\beta \gamma + \alpha \delta) + \beta \delta b) J_{v \times v}.
\end{align*}
It follows that $\gamma A^T + \delta J_{v \times b}$ is a left inverse of $Q$ if and only if
\begin{align*}
\alpha \gamma (r - \lambda) &= 1 \quad \text{and}\\
\alpha \gamma \lambda + r  (\beta \gamma + \alpha \delta) + \beta \delta b &= 0.
\end{align*}
Given $\alpha$ and $\beta$, we have two linear equations in the unknowns $\gamma$ and $\delta$.
We can solve for $\gamma$ in the first equation:
\[ \gamma = \frac{1}{\alpha(r-\lambda)}.\]
 Then substitute $\gamma$ into the second equation to solve for $\delta$:
 \[\delta = \frac{- \gamma ( \alpha \lambda + rb)}{\alpha r + \beta b}.\] 
 This proves step 3. Steps 1--3 immediately yield the desired conclusion.
 \end{proof}

 \begin{remark}
Theorem \ref{MP.thm} does not necessarily hold if $A$ is the incidence matrix of an $(r,\lambda)$-design that is not a BIBD. This because the equation $J_{v \times v} A^T  = k J_{v \times b}$ stated in Lemma \ref{properties.lem} only makes sense if $k$ is constant. So step 2 of the proof of Theorem \ref{MP.thm} is invalid if we do not start with a BIBD.
\end{remark}

 

\section{Computing the Variance}
\label{variance.sec}

Now we discuss the computation of the variance associated with an estimator. 
We wish to determine the distribution
of the random variable $T_j$ in order to help us compute the variance of
the estimator $\tilde{p_j}$ given by the formula (\ref{estimator}).
We observe that there are $t$ output samples, each of which contributes either $+1$ or $+0$ to the value of $T_j$.  These samples are generated independently, so if we can determine the probability $p$ that a given sample contributes $+1$ then we know that  $T_j$ has a binomial distribution $B(t,p)$ with the specified probability $p$.  We can determine $p$ for a scheme based on an $(r,\lambda)$-design as follows:

\begin{itemize}
\item With probability $p_j$, the input sample used to generate the output sample is $x_j$.  The probability that this occurs and the output contributes $+1$ to $T_j$ is $p_jp^*$.
\item With probability $1-p_j$, the input sample is not $x_j$.  the probability that this occurs and the output contributes $+1$ to $T_j$ is $(1-p_j)q^*$.
\end{itemize}
Thus the probability that a given input sample results in an output sample that contributes $+1$ to $T_j$ is \begin{align*}
p &= p_jp^*+(1-p_j)q^*\\
&=p_j(p^*-q^*)+q^*.
\end{align*}

So we have $T_j\sim B(t,p_j(p^*-q^*)+q^*)$, and hence the variance of $T_j$ is
\begin{align*}
  \operatorname{Var}(T_j)&= tp(1-p)\\
  &= t(p_j(p^*-q^*)+q^*)(1-(p_j(p^*-q^*)+q^*)).
\end{align*}

Now we compute
\begin{align}
\operatorname{Var}(\tilde{p}_j)&=\operatorname{Var}\left(\frac{T_j-tq^*}{t(p^*-q^*)}\right) \nonumber\\
&=\frac{\operatorname{Var}(T_j)}{t^2(p^*-q^*)^2}\nonumber\\
&=\frac{t(p_j(p^*-q^*)+q^*)(1-(p_j(p^*-q^*)+q^*))}{t^2(p^*-q^*)^2}\nonumber\\
&=\frac{(p_j(p^*-q^*)+q^*)(1-(p_j(p^*-q^*)+q^*))}{t(p^*-q^*)^2}\nonumber\\
&=\frac{(1-2q^*)p_j(p^*-q^*)+q^*-p_j^2(p^*-q^*)^2-(q^*)^2}{t(p^*-q^*)^2}.\label{varM.eq}
\end{align}

To get 
variance of the estimator, we sum this expression (\ref{varM.eq}) for $j=1,2,\dotsc ,n$.
Denote $\tilde{\pi} = (\tilde{p}_1, \dots , \tilde{p}_n)$, so 
\[\operatorname{Var}(\tilde{\pi}) = \sum_{j=1}^{n} \operatorname{Var}(\tilde{p}_j).\]
Then we have 
\begin{align}
\nonumber 
\operatorname{Var}(\tilde{\pi})
&=\sum_{j=1}^n\frac{(1-2q^*)p_j(p^*-q^*)+q^*-p_j^2(p^*-q^*)^2-(q^*)^2}{t(p^*-q^*)^2},\\
\nonumber &=\frac{(1-2q^*)}{t(p^*-q^*)}+\sum_{j=1}^n\frac{q^*-p_j^2(p^*-q^*)^2-(q^*)^2}{t(p^*-q^*)^2},\\
\label{El2.eq}&=\frac{1-2q^*}{t(p^*-q^*)}+\frac{nq^*(1 - q^*)}{t(p^*-q^*)^2}-\sum_{j=1}^n\frac{p_j^2}{t}.
\end{align}
Note that the above equation holds for any $(r,\lambda)$-design (including BIBDs).

\medskip

We mentioned in the Introduction that Warner \cite{War} introduced the idea of ``randomized response'' in 1965. This corresponds to the case of 
$n=2$ possible outcomes. 
We explore estimators in this setting now.

\begin{example}
We consider the  set systems $(X,\mathcal{B})$ and $(Y,\mathcal{Y})$,
where $X = \{1,2\}$, $B_{\MS{a}} = \{1\}$ and $B_{\MS{b}} = \{2\}$; and $Y = \{\MS{a},\MS{b}\}$, $Y_1 = \{\MS{a}\}$ and $Y_2 = \{\MS{b}\}$.
So we have $m=n=2$. These designs are both (trivial) $(2,2,1,1,0)$-BIBDs.

For a given $\theta$, we have $p^* = \theta$ and $q^* = 1 - \theta$; hence, $p^* - q^* = 2 \theta - 1$.
We compute the estimator $\tilde{p}_1$.
From (\ref{estimator}), we have 
\[\tilde{p}_1 = \frac{f_1 -t (1- \theta)}{t(2 \theta - 1)}.\]
Of course, we have $E[\tilde{p}_1] = p_1$. First, we compute $\mathrm{Var}(\tilde{p}_1)$ using
the formula (6) from \cite{War}:
\begin{align*}
\mathrm{Var}(\tilde{p}_1) &= \frac{1}{t} \left( \frac{1}{4(2\theta-1)^2} 
- \left( p_1 - \frac{1}{2} \right)^2 \right)\\
&= \frac{1}{t} \left( \frac{1}{4(2\theta-1)^2} 
- \left( {p_1}^2 - p_1 + \frac{1}{4} \right)
\right)\\
&= \frac{1}{t} \left( p_1 - {p_1}^2  
- \frac{1}{4} \left( 1 -   \frac{1}{(2\theta-1)^2} \right)
\right)\\
&= \frac{1}{t} \left( p_1 - {p_1}^2  
+ \frac{\theta - \theta^2}{(2\theta-1)^2} 
\right)
.\end{align*}

To compare, we repeat the calculation using (\ref{varM.eq}):
\begin{align*}
\mathrm{Var}(\tilde{p}_1) &= 
\frac{q^*(1 - q^*) + (1-2q^*)(p^*-q^*)p_1 -(p^*-q^*)^2 {p_1}^2}{t(p^*-q^*)^2}\\
&= \frac{\theta - \theta^2 + (2\theta-1)^2 p_1 - (2\theta-1)^2 {p_1}^2 }{t(2\theta-1)^2}\\
&= \frac{1}{t} \left( \frac{\theta - \theta^2}{(2\theta-1)^2} + p_1 - {p_1}^2 \right) 
.\end{align*}
As expected, we get the same value for the variance in both cases.
\end{example} 

\bigskip

When we start with a BIBD, our formula (\ref{El2.eq}) is identical to equation (33) from \cite{PNL}, 
 which is stated as follows:
\begin{equation}
\label{PNL.eq} 
\operatorname{Var}(\tilde{\pi}) 
= \frac{1}{t} \left( \frac{(n-1)^2(k e^{\epsilon} + n - k)^2}{k(n-k)(e^{\epsilon}-1)^2 n}  + \frac{1}{n} - \sum_{j=1}^n {p_j^2}\right).
\end{equation}

There is a more complicated expression, presented as equation (35) in \cite{PNL}, which gives a formula
$\operatorname{Var}(\tilde{\pi})$ 
in the case where the LDP protocol is based on an $(r, \lambda)$-design:
\begin{equation}
\label{PNL2.eq} 
\operatorname{Var}(\tilde{\pi}) 
= \frac{1}{t} \left( \frac{(r e^{\epsilon} + (n-1)(\lambda e^{\epsilon} + r - \lambda))%
(n(m-r)+ (n-1)(r - \lambda)(e^{\epsilon}-1))}%
{(r -\lambda)^2 (e^{\epsilon}-1)^2 n}  
+ \frac{1}{n} - \sum_{j=1}^n {p_j^2}\right).
\end{equation}
 
\medskip
 
 Equation (\ref{PNL2.eq})
of course contains the BIBD formula  (\ref{PNL.eq}) as a special case, though this might might not be immediately obvious by inspecting the two formulas.
Further, we note that equation (\ref{PNL2.eq})  is equivalent to our formula (\ref{El2.eq}); this is also not obvious.

\subsection{Optimality of Estimators}
\label{optimal.sec}

There has been considerable work concerning optimality of estimators. 
See, for example, \cite{CN,Gen,PNL,Wang2019,YB,YB19}.
It has been shown by various authors that estimators derived from BIBD-based LDP protocols are optimal, under certain assumptions. A recent work by Gentle 
\cite{Gen} establishes that that all optimal estimators are derived from BIBDs. In this section, we mainly follow the approach of Chai and Nayak \cite{CN}.

We use the following notation  that we have already introduced. $Q$ is the TPM, $\pi = (p_1, \dots , p_n)$ is the probability distribution on $X$,
and $\rho = (\rho_1, \dots , \rho_m)$ is the induced distribution on $Y$.
Also, $\tilde{\pi} = (\tilde{p_1}, \dots , \tilde{p_n})$ is the probability distribution on $X$ given by the estimator. 
$D_{\rho}$ is the matrix with values $\rho_1, \dots , \rho_m$ on the diagonal and $0$'s elsewhere, and $t$ denotes the number of samples of elements from $X$. 

Later, we will make use of the following simple lemma, which is very similar to \cite[Lemma 4]{Gen}.

\begin{lemma}
\label{uniform.lem}
Suppose $Q$ is a TPM based on a $(v,b,r,k,\lambda)$-BIBD and $\pi$ is the uniform distribution on $X$, where $n = v$.
Then $\rho = (\rho_1, \dots , \rho_m)$ is the uniform distribution on $Y$, where $m = b$.
\end{lemma}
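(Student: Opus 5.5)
The plan is to compute $\rho = Q\pi$ directly using the explicit form of the TPM in (\ref{TPM.eq}), namely $Q = (\alpha_1-\alpha_2)A + \alpha_2 J_{b\times v}$. Here $\pi$ is regarded as the column vector $\frac{1}{v}\mathbf{1}_v$. The entry $\rho_i$ is $\mathsf{Prob}[Y = y_i] = \sum_{x} \mathsf{Prob}[f(x)=y_i]\,p_x$, which is exactly the $i$th entry of $Q\pi$. So it suffices to show that $Q\mathbf{1}_v$ is a constant vector.

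I will first compute $A\mathbf{1}_v$. Its $i$th entry is the number of points in block $B_{y_i}$, which equals $k$ for every block because the design is a BIBD. Hence $A\mathbf{1}_v = k\mathbf{1}_b$. This is the row-sum analogue of the identity $J_{v\times v}A^T = kJ_{v\times b}$ in Lemma \ref{properties.lem}. Next, $J_{b\times v}\mathbf{1}_v = v\mathbf{1}_b$. Combining these gives
\[
\rho = \frac{1}{v}Q\mathbf{1}_v = \frac{(\alpha_1-\alpha_2)k + \alpha_2 v}{v}\,\mathbf{1}_b,
\]
so every $\rho_i$ is equal to the same constant.

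Since $\rho$ is a probability distribution on $Y$ (each column of $Q$ sums to $1$), the common value must be $1/b$. One can also check this directly: using $bk = vr$ together with $\alpha_1 = \theta/r$ and $\alpha_2 = (1-\theta)/(b-r)$, the constant is $\bigl(k\theta/r + (1-\theta)(v - k)/(b-r)\bigr)/v$, and the relation $v(b-r) = b(v-k)$ simplifies this to $1/b$. The shortcut via column sums avoids this computation, so I will use it.

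There is no real obstacle. The only point needing care is that constant block size $k$ is essential: for a general $(r,\lambda)$-design the row sums of $A$ vary, and the conclusion fails. This mirrors the remark following Theorem \ref{MP.thm}.
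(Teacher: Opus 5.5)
Your proposal is correct and follows the paper's proof: constant block size $k$ gives $A$ constant row sums, hence $Q$ has constant row sums via (\ref{TPM.eq}), so all $\rho_i$ are equal, and since $\rho$ is a probability distribution each equals $1/b$. Your direct check using $bk = vr$ and $v(b-r) = b(v-k)$ is a correct extra verification but not needed.
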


\begin{proof}
We  compute 
\[ \rho = Q \left(\frac{1}{n},  \dots , \frac{1}{n} \right)^T,\]
where $n = v$.
Note that we write $\rho$ as a column vector, for convenience.
Since $(X,\mathcal{B})$ is a BIBD, it follows that $A$ has constant row sum, namely $k$. Then, from (\ref{TPM.eq}), $Q$ also has constant row sum. Therefore $\rho_1 = \dots = \rho_m$. Since $\rho$ is a probability distribution, we must have
$\rho_i = 1/m$ for $i = 1, \dots , m$.
\end{proof}

\begin{remark}
Lemma \ref{uniform.lem} does not hold if $Q$ is a TPM based on an $(r,\lambda)$-design that does not have constant block size.
\end{remark}

One general bound, given by Chai and Nayak in \cite[Proposition 2.1]{CN}, is stated as follows:
\begin{equation}
\label{CN.eq}
\operatorname{Var}(\tilde{\pi}) \geq \frac{1}{t}\left( \mathsf{trace}((Q^T D_{\rho}^{-1} Q)^{-1}) - \sum_{j=1}^n {p_j^2}\right) .
\end{equation}

Prior to proving (\ref{CN.eq}), it is first stated without proof in \cite[equation (2.1)]{CN} that
\begin{equation}
\label{CN1.eq}
\operatorname{Var}(\tilde{\pi}) = \frac{1}{t}\left( \mathsf{trace}(L D_{\rho} L^T) - \sum_{j=1}^n {p_j^2}\right) .
\end{equation}
In (\ref{CN1.eq}), the estimator $L$ is any left inverse of the TPM $Q$.

\medskip

We  discuss some details of the derivation of (\ref{CN1.eq}).
It is clear that $\rho = Q \pi^T$ is the probability distribution on  $Y$ that is induced by the distribution $\pi = (p_1, \dots , p_n)$ on  $X$ and the TPM $Q$.  As usual, we are assuming that $t$ elements of $X$ are drawn independently according to the distribution $\pi$ then encoded as elements of $Y$ using $Q$.

For $1 \leq j \leq m$, let $f_j$ be the observed number of occurrences of $y_j$ among the $t$ samples. 
Denote $\mathbf{f} = (f_1, \dots , f_m)$ and let  
\begin{equation}
\label{rhohat.eq}
\hat{\rho} = \left( \frac{1}{t} \mathbf{f} \right)^T.
\end{equation} The column vector $\hat{\rho}$ is an unbiased estimator for $\rho$.


An unbiased linear estimator for $\pi$  is a linear function of $\hat{\rho}$ that can be described as multiplication of $\hat{\rho}$ by an $n\times m$ matrix $L$.  Thus we estimate the distribution $\pi$ to be
$\tilde{\pi}$, where 
\[\tilde{\pi}^T = L \hat{\rho}.\] This estimator $L$ is unbiased for any distribution $\pi$ on $X$ precisely when $LQ=I_{n\times n}$.
 
The {\em risk function} of an estimator $L$ is the quantity
\[ \mathbf{R}(Q,L;\pi) = tE\left[\Vert \tilde{\pi}-\pi \Vert ^2\right].\] Thus
$\mathbf{R}(Q,L;\pi)$ is the expected value of the square of the euclidean distance between $\tilde{\pi} = (L\hat{\rho})^T$ and $\pi$, multiplied by $t$ (in order to normalize the risk for sample size). 
By definition of the euclidean distance and the linearity of expectation, we have
\begin{align}
\nonumber \mathbf{R}(Q,L;\pi) &=
t\, E\left[\Vert \tilde{\pi}-\pi\Vert ^2\right]\\
\nonumber &=t\sum_{i=1}^n E\left[(\tilde{p_i}-p_i)^2\right],\\
\nonumber 
&=t\sum_{i=1}^n\operatorname{Var}(\tilde{p_i}) ,
\end{align}
since $p_i = E[\tilde{p_i}]$.

Let $V = (v_{ij})$ denote the {covariance matrix} of $L\hat{\rho}$. 
Thus $V$ is the matrix having entries 
\[v_{ij} = E\left[(\tilde{p_i}-p_i)(\tilde{p_j}-p_j)\right].\]  
In particular, 
$v_{ii} = \operatorname{Var}(\tilde{p_i})$. 
We therefore have
\begin{align*}
t\sum_{i=1}^n\operatorname{Var}(\tilde{p_i})&=t\sum_{i=1}^n v_{ii}\\
&=t\, \mathsf{trace}(V).
\end{align*}

From (\ref{rhohat.eq}), we see that $\hat{\rho}$ is the mean of $t$ independent and identically distributed (i.i.d.) samples, and 
each $\tilde{p_i}$ is a linear combination of entries of $\hat{\rho}$.  It follows that 
\[\operatorname{Var}(\tilde{p_i})= \frac{1}{t}\, \operatorname{Var}((L\hat{\rho}^\prime)_i),\] where $\hat{\rho}^\prime$ is the outcome of a single sample, so we have 
\begin{align}
t\, \mathsf{trace}(V)&=\sum_{i=1}^n\operatorname{Var}((L\hat{\rho}^\prime)_i).
\end{align}

Observe that, by definition, $\mathsf{Prob}[Y = y_j] = \rho_j$. 
We therefore have 
\begin{align}
\nonumber \operatorname{Var}((L\hat{\rho^\prime})_i )&=E\left[(L\hat{\rho}^\prime)_i^2\right]-p_i^2,\\
\label{varLrhoprime.eq} &=\left(\sum_{j=1}^m\rho_jL_{ij}^2\right)-p_i^2.
\end{align}

Recall that $D_\rho$ is the $m\times m$ diagonal matrix with the entries of $\rho_1, \dots , \rho_m$ on the main diagonal.  We aim to show that 
\begin{equation}
\label{risk.eq} \mathbf{R}(Q,L;\pi) = \mathsf{trace}(LD_\rho L^T)-\sum_{i=1}^np_i^2.
\end{equation} 
We note that
\begin{align*}
(LD_\rho)_{ij}&=\sum_{k=1}^m L_{ik}(D_\rho)_{kj},\\
&=\rho_jL_{ij}.
\end{align*}
Thus
\begin{align*}
(LD_\rho L^T)_{ij}&=\sum_{k=1}^m(LD_\rho)_{ik}L^T_{kj}\\
&=\sum_{k=1}^m\rho_kL_{ik}L_{jk}.
\end{align*}
In particular, \[(LD_\rho L^T)_{ii}=\sum_{k=1}^m\rho_kL_{ik}^2.\]
From this we see that
\begin{align*}
\mathsf{trace}(LD_\rho L^T)-\sum_{i=1}^np_i^2&=\sum_{i=1}^n\left(\sum_{k=1}^m \rho_kL_{ik}^2-p_i^2\right),\\
&=\sum_{i=1}^n\operatorname{Var}((L\hat{\rho}^\prime)_i) \quad \text{from (\ref{varLrhoprime.eq})},
\end{align*}
and so we conclude that 
(\ref{risk.eq}) holds. 
 
 To recapitulate, the formula (\ref{CN1.eq}) expresses the variance (or risk) as the trace of a certain matrix that depends on the estimator and the induced probability distribution on $Y$. However, we have noted that there are many possible unbiased linear estimators for a given TPM when $m > n$. It is natural to ask which estimator yields the lowest risk for a given TPM. Chai and Nayak \cite{CN} show that a lower bound for this risk is given by the inequality (\ref{CN.eq}). They show further in \cite[Proposition 2]{CN} that this bound (\ref{CN.eq}) is met with equality when the estimator $L$ is given by the following formula:
\begin{equation}
\label{CN-L.eq} L = (Q^T (D_\rho)^{-1} Q)^{-1} Q^T (D_\rho)^{-1}.
\end{equation}
 
 \bigskip
 
 For the rest of this section, suppose $Q$ is a BIBD-based TPM and $\pi$ is a uniform probability distribution on $X$. Then, from Lemma \ref{uniform.lem}, the induced probability distribution on $Y$ is also uniform: $\rho_i = 1/m$ for $1 \leq i \leq m$. The following theorem is an immediate consequence.
 
 \begin{theorem}
 Suppose $Q$ is a BIBD-based TPM and $\pi$ is a uniform probability distribution on $X$.
Then the (optimal) estimator $L$ given by (\ref{CN-L.eq}) is the Moore-Penrose left inverse of $Q$.
\end{theorem}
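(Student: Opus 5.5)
The argument is a direct substitution into (\ref{CN-L.eq}). Since $\pi$ is uniform on $X$ and $Q$ is BIBD-based, Lemma \ref{uniform.lem} gives $\rho_i = 1/m$ for every $i$, where $m=b$. Hence $D_\rho = \frac{1}{m} I_{m\times m}$ and $D_\rho^{-1} = m I_{m \times m}$. The only point needing care is that $D_\rho$ is invertible, which holds because every $\rho_i = 1/m > 0$.

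Substituting into (\ref{CN-L.eq}) gives
\[
L = (Q^T (m I_{m\times m}) Q)^{-1} Q^T (m I_{m\times m}) = \frac{1}{m}(Q^TQ)^{-1}\, m\, Q^T = (Q^TQ)^{-1}Q^T .
\]
Here we use that $(mM)^{-1} = \frac{1}{m}M^{-1}$ for invertible $M$, and that scalar multiples of the identity commute with everything. By (\ref{MPI.eq}), the right-hand side is exactly $Q^+$.

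Invertibility of $Q^TQ$ is the standing assumption that $Q$ has rank $n$. For a BIBD-based $Q$ it can also be checked directly. By Lemma \ref{QQT.eq}, $Q^TQ = cI + dJ$ with $c = (r-\lambda)(\alpha_1-\alpha_2)^2 > 0$, because $r>\lambda$ and $\alpha_1 > \alpha_2$. Moreover $vd + c > 0$, since $d>0$ when all the $\alpha_i$ are positive. So Lemma \ref{inverse2.lem} applies.

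I expect no real obstacle: the whole content is Lemma \ref{uniform.lem}, which makes $D_\rho$ scalar. It is also worth noting, in the spirit of Theorem \ref{MP.thm}, that this optimal estimator therefore coincides with the estimator $L$ of (\ref{L.eq}).
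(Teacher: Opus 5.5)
Your proposal is correct and follows the paper's own proof: Lemma \ref{uniform.lem} gives $D_\rho = \frac{1}{m} I_{m\times m}$, and the scalar factors in (\ref{CN-L.eq}) then cancel to leave $(Q^TQ)^{-1}Q^T = Q^+$. Your extra check, via Lemmas \ref{QQT.eq} and \ref{inverse2.lem}, that $Q^TQ$ is invertible is correct; the paper leaves this step implicit.
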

 
 \begin{proof}
 We have $D_\rho = \frac{1}{m} I_{m \times m}$ and hence 
 \[ (Q^T (D_\rho)^{-1} Q)^{-1} Q^T (D_\rho)^{-1} = (Q^T  Q)^{-1} Q^T .\] 
 \end{proof}
  
 If we assume a uniform probability distribution on $X$, then the bound (\ref{CN.eq}) simplifies to give the following:
\begin{equation}
\label{CN2.eq}
\operatorname{Var}(\tilde{\pi}) \geq \frac{1}{t}\left( \frac{\mathsf{trace}((Q^T Q)^{-1})}{m} - \frac{1}{n} \right) .
\end{equation}
Further, equality is attained for a BIBD-based TPM and a uniform probability distibution on $X$.
 
\medskip

We present a numerical example.

\begin{example}
Suppose $(X, \mathcal{B})$ is the $(9,12,4,3,1)$-BIBD (an affine plane of order three). The blocks are
\[
\begin{array}{ccc}
\{1,2,3\} & \{4,5,6\} & \{7,8,9\}\\
\{1,4,7\} & \{2,5,8\} & \{3,6,9\}\\
\{1,5,9\} & \{2,6,7\} & \{3,4,8\}\\
\{1,6,8\} & \{2,4,9\} & \{3,5,7\}
\end{array}
\]
Suppose we have an equiprobable source distribution, i.e., $\mathsf{Prob}[ X = x ] = 1/9$ for $1 \leq x \leq 9$. Suppose also that $\theta = 1/2$ and $t = 10$. The value
of $\operatorname{Var}(\tilde{\pi})$ is computed to be $256/45$ using both (\ref{El2.eq}) and (\ref{PNL.eq}).
It is also the case that the right side of both (\ref{CN.eq}) and (\ref{CN2.eq}) has the value $256/45$.
%
\end{example}

We can simplify the lower bound in (\ref{CN2.eq}) by computing the desired trace,
assuming that we have an LDP protocol based on a BIBD and the probability distribution on $X$ is uniform. 
From (\ref{Q.eq}) and (\ref{inverse2.eq}), we have
\begin{equation}
\label{QTQ.eq}
 (Q^T Q)^{-1} = c' I_{v \times v} + d' J_{v \times v},
 \end{equation}
where
\begin{equation}\label{4eq}
\begin{split}
c &= (r - \lambda)(\alpha_1 - \alpha_2)^2 \\
d &= \lambda (\alpha_1 - \alpha_2)^2 + 2r \alpha_2(\alpha_1 - \alpha_2) + {\alpha_2}^2 b\\
c' &= \frac{1}{c}\\
d' &= -\frac{d}{c(vd + c)}.
\end{split}
\end{equation}

Therefore  we have  
\begin{equation}
\label{trace.eq}
\mathsf{trace}((Q^T  Q)^{-1}) = v(c' + d'),
\end{equation}
where $c'$ and $d'$ are given  in (\ref{4eq}).

Therefore, the bound (\ref{CN2.eq}) can be rewritten as 
\[
 \operatorname{Var}(\tilde{\pi}) \geq \frac{1}{t}\left( \frac{v(c' + d')}{b} - \frac{1}{v}\right).
\]
Also, as before, equality occurs in this bound when we have a TPM based on a BIBD and the distribution on $X$ is uniform.

\medskip


So far, we have discussed some results pertaining to finding the optimal estimator for a given TPM.
The more difficult problem is to find which TPM yields the best estimator for a given $X$ and probability distribution $\pi$ on $X$.
This problem is discussed in a general setting in 
Chan and Nayak \cite[\S 3]{CN}. They proved a lower bound which we recall now. For convenience, we assume the
TPM $Q$ is based on a BIBD.

Suppose the differential privacy ratio is denoted by $\gamma$ and
 define
\[ f(x) = \frac{v^2 (x \gamma^2 +v - x)}{(x \gamma + v - x)^2}.\]
Chan and Nayak  proved the following result in \cite[Lemma 3.2]{CN}.

\begin{theorem}
\label{CNbound.thm}
Suppose $Q$ is a TPM and $D_\rho$ is the $m\times m$ diagonal matrix with the entries of $\rho_1, \dots , \rho_m$ on the main diagonal, where $\rho$ is the probability distribution that is induced on $Y$. Then the following bound holds:
\begin{equation}
\label{CNbound.eq} \mathsf{trace}((Q^T D_{\rho}^{-1} Q)^{-1}) \geq \frac{(v-1)^2}{f(k) - v} + \frac{1}{v},
\end{equation}
\end{theorem}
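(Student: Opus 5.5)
The plan is to bound $\mathsf{trace}(M^{-1})$, where
\[ M = Q^T D_\rho^{-1} Q , \]
by splitting $\mathbb{R}^v$ into the span of the all-ones vector $\mathbf{1}$ and its orthogonal complement. The first fact I will use is that the columns of any TPM sum to $1$, so $Q^T\mathbf{1}_m=\mathbf{1}_v$. Since $\rho=Q\pi^T$, we also have $D_\rho^{-1}\rho=\mathbf{1}_m$. Together these give
\[ M\pi^T = Q^T D_\rho^{-1}\rho = \mathbf{1}_v, \]
so $M^{-1}\mathbf{1}=\pi^T$ and $\mathbf{1}^T M^{-1}\mathbf{1}=\sum_j p_j=1$. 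This exact identity will supply the additive term $\frac{1}{v}$ in (\ref{CNbound.eq}).

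Let $P=I_{v\times v}-\frac1v J_{v\times v}$. Because $\mathbf{1}^TM^{-1}\mathbf{1}=1$, we have
\[ \mathsf{trace}(M^{-1})=\mathsf{trace}(PM^{-1}P)+\frac{1}{v}. \]
For the complementary part, take an orthonormal basis $U$ ($v\times(v-1)$) of $\mathbf{1}^\perp$. The compression of the inverse dominates the inverse of the compression, $U^TM^{-1}U\succeq (U^TMU)^{-1}$; this is the Schur complement inequality, equivalently Cauchy--Schwarz for the positive definite form defined by $M$. Applying the AM--HM inequality to the $v-1$ eigenvalues of $U^TMU$ then gives
\[ \mathsf{trace}(PM^{-1}P)\ \ge\ \frac{(v-1)^2}{\mathsf{trace}(PMP)}. \]
It therefore remains to show $\mathsf{trace}(PMP)\le f(k)-v$. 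Expanding, and writing $q_j$ for the $j$th row of $Q$,
\[ \mathsf{trace}(PMP)=\mathsf{trace}(M)-\frac{\mathbf{1}^TM\mathbf{1}}{v}=\sum_{j=1}^m\frac{1}{\rho_j}\Bigl(\Vert q_j\Vert^2-\frac{(q_j\mathbf{1})^2}{v}\Bigr). \]

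For a BIBD-based $Q$, every row has $k$ entries equal to $\alpha_1=\gamma\alpha_2$ and $v-k$ entries equal to $\alpha_2$. Using $bk=vr$ and Theorem \ref{BIBD.thm}, we get
\[ b\alpha_2=\frac{v}{k\gamma+v-k}. \]
When $\rho_j=1/b$ for all $j$ (the uniform case, Lemma \ref{uniform.lem}), the sum equals
\[ b^2\alpha_2^2\Bigl(k\gamma^2+v-k-\frac{(k\gamma+v-k)^2}{v}\Bigr)=f(k)-v \]
exactly. So the whole argument reduces to the inequality $\sum_j\rho_j^{-1}c_j\le b\sum_j c_j$, where $c_j\ge 0$ is the (constant) bracket above.

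This last step is where I expect the real obstacle. The map $\pi\mapsto\sum_j c/\rho_j$ is convex in $\pi$, so for a general $\pi$ the crude route gives an upper bound that is \emph{larger}, not smaller, than the uniform value. I would therefore try to avoid bounding $\mathsf{trace}(PMP)$ directly for non-uniform $\rho$. Instead I would use $M\pi^T=\mathbf{1}$ more cleverly: decompose along $\pi$ rather than along $\mathbf{1}$, replacing $P$ by the $M$-orthogonal projection that fixes $\pi^T$. I would then compare, via Jensen's inequality applied to $\rho_j=\sum_x Q_{jx}p_x$ together with the row structure of the BIBD, the resulting compressed trace with its value at the uniform $\pi$.

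If that comparison fails for arbitrary $\pi$, the fallback is to follow \cite[Lemma 3.2]{CN} literally. There the quantity bounded is the risk at the least favourable (equivalently uniform, by symmetry of the BIBD) prior. For that prior, the computation above already yields (\ref{CNbound.eq}) with equality.
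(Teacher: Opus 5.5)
Your reduction is correct for every $\pi$ and is the right skeleton: $M\pi^T=\mathbf{1}$ gives $\mathbf{1}^TM^{-1}\mathbf{1}=1$, and the compression and AM--HM steps give $\mathsf{trace}(M^{-1})\ge (v-1)^2/\mathsf{trace}(PMP)+\frac1v$. The paper itself gives no argument beyond citing \cite[Lemma 3.2]{CN}.

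\textbf{Your primary plan cannot work.} Extending the bound to non-uniform $\pi$ via an $M$-orthogonal projection and Jensen is hopeless, because the inequality is false there. For a BIBD-based $Q$, the inverse $Q^+$ is a linear combination of $A^T$ and $J_{v\times b}$ (Theorem \ref{MP.thm}). So all its columns have the same norm, and it is unbiased for every $\pi$. Hence
\[ \mathsf{trace}(M^{-1})=\min_{LQ=I}\mathsf{trace}(LD_\rho L^T)\le \mathsf{trace}(Q^+D_\rho (Q^+)^T). \]
The right-hand side does not depend on $\rho$. It equals the uniform value, which is the right side of (\ref{CNbound.eq}), and the inequality is strict whenever $Q^+$ is not optimal at $\pi$.

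Concretely, take the $(4,6,3,2,1)$-BIBD of Example \ref{E1.exam} with $e^\epsilon=3$. Uniform $\pi$ gives $M=\frac13 I+\frac{11}{12}J$ and $\mathsf{trace}(M^{-1})=\frac{37}{4}$. This is exactly the right side, since $f(2)=5$. But $\pi=(1,0,0,0)$ gives an $M$ whose first row and column are all $1$'s and whose lower-right $3\times 3$ block is $\frac49 I+\frac{13}{9}J$. Then $\mathsf{trace}(M^{-1})=\frac{31}{4}<\frac{37}{4}$. So the statement must be read with $\rho$ induced by the uniform distribution on $X$, which is how the paper applies it immediately afterwards. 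Your ``fallback'' is therefore not a fallback but the only correct reading.

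\textbf{Under that reading, your argument proves only an identity.} With $Q$ BIBD-based and $k$ its block size, your computation is complete. But $M=\sigma I+\tau J$ makes AM--HM tight, and $\mathsf{trace}(M)=f(k)$ holds exactly, for every $\gamma$. That contradicts the paper's remark that the bound is met only when $\gamma=(v-k)/k$; note that $x=v/(\gamma+1)$ is precisely the maximiser of $f$. The statement has content only as a bound for an arbitrary TPM with ratio $\gamma$, with $f(k)$ replaced by $\max_{1\le x\le v-1}f(x)$. A BIBD then attains it exactly when its block size maximises $f$.

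\textbf{The missing step for that version} is $\mathsf{trace}(M)\le\max_x f(x)$ at uniform $\pi$. Write
\[ \mathsf{trace}(M)=\sum_{j}(q_j\mathbf{1})\cdot\frac{v\Vert q_j\Vert^2}{(q_j\mathbf{1})^2}, \qquad \sum_j q_j\mathbf{1}=v, \]
and bound $v\Vert q\Vert^2/(q\mathbf{1})^2$ over rows whose entries lie in some interval $[a,\gamma a]$. After normalising to $q\mathbf{1}=1$, this is the convex function $v\Vert q\Vert^2$ on a polytope. Its vertices are the two-valued rows, with $x$ entries equal to $\gamma a$ and $v-x$ entries equal to $a$, and at such a row the value is $f(x)/v$. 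Since $\mathbf{1}^TM\mathbf{1}=v^2$, this yields $\mathsf{trace}(PMP)\le \max_x f(x)-v$.
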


Chan and Nayak  also proved in \cite[Lemma 3.3]{CN} that the lower bound (\ref{CNbound.eq}) is tight
if  $\gamma = (v-k)/k$ and  
\[ Q^T D_{\rho}^{-1} Q =  \sigma I_{v \times v} + \tau J_{v \times v},\]
where
\[ \sigma = \frac{f(k) - n}{n-1} \quad \text{and} \quad \tau = 1 - \frac{\sigma}{n}.\]
Using the formulas we have developed in Lemma \ref{QQT.eq}, this condition can be shown to hold in the scenario we are considering, i.e., when $Q$ is a TPM based on a $(v,b,r,k,\lambda)$-BIBD and the probability distribution $\pi$ on $X$ is uniform. 

It is interesting to note in the above discussion that the Chai-Nayak bound is met only when the differential privacy ratio $\gamma$ has the value $(v-k)/k$.  In general, we could start with a given BIBD, and then choose the differential privacy ratio to be $(v-k)/k$. This would ensure optimality of the TPM in the Chai-Nayak setting.
We illustrate in the following example.

\begin{example}
Suppose we take $(v,k,\lambda) = (25,4,1)$ and we set $e^{\epsilon} = (25-4)/4 = 21/4$. Then
\[ \mathsf{trace}((Q^T D_{\rho}^{-1} Q)^{-1}) = \frac{98213}{36125},\]
which meets the bound (\ref{CNbound.eq}) with equality.
\end{example}

 \section{Discussion and Summary}
 
 The construction of LDP protocols from BIBDs is an interesting application of design theory in a perhaps unexpected setting. LDP protocols can be implemented efficiently if we start with a BIBD with a rich algebraic structure. In this case, the corresponding optimal estimators will have a nice structure, since they can be constructed from the Moore-Penrose inverse of the TPM.
 
 Another aspect of LDP protocols that has been considered by some authors (e.g., \cite{PNL}) is \emph{communication cost}.
 The communication cost is defined to be $\log_2 m$, since this is the number of bits required to represent an element of $Y$. In a BIBD-based TPM, we have $m = b$ (the number of blocks in the BIBD). For given values of $v$ and $k$, the parameter $b$ is minimized by choosing $\lambda$ to be the smallest integer such that a 
 $(v,k,\lambda)$-BIBD exists.

  \end{document}